\documentclass{amsart}
\usepackage[margin=1in]{geometry}

\usepackage{color,amssymb, comment, mathrsfs,tikz,upgreek, contour, soul, colonequals, mathdots,fancyvrb}
\usepackage[colorlinks, linkcolor={blue}]{hyperref}
\usepackage{tikz-cd}
\usepackage{cleveref}
\usetikzlibrary{cd}
\usetikzlibrary{fit,shapes.geometric}
\usetikzlibrary{matrix}
\usetikzlibrary{arrows}
\usepackage{enumitem}
\usepackage{mathtools}
\usepackage{graphicx}
\usepackage[all]{xy}
\usepackage[mathlines,pagewise]{lineno}
\usepackage[only,llbracket,rrbracket,llparenthesis,rrparenthesis]{stmaryrd} 
\usepackage{accsupp}
\allowdisplaybreaks

\theoremstyle{plain}
\newtheorem{theorem}{Theorem}[section]
\newtheorem{lemma}[theorem]{Lemma}
\newtheorem{proposition}[theorem]{Proposition}
\newtheorem{cor}[theorem]{Corollary}

\theoremstyle{definition}
\newtheorem{rem}[theorem]{Remark}
\newtheorem*{ack}{Acknowledgements}

\newtheorem{question}[theorem]{Question}

\crefname{rem}{remark}{remarks}
\Crefname{rem}{Remark}{Remarks}
\crefname{cor}{corollary}{corollaries}
\Crefname{cor}{Corollary}{Corollaries}

\numberwithin{equation}{section}

\newcommand{\D}{\textsf{\upshape D}}

\newcommand{\kk}{\Bbbk}

\newcommand{\xra}{\xrightarrow}
\newcommand{\id}{\operatorname{id}}

\newcommand{\Ext}{\operatorname{Ext}}

\newcommand{\Hom}{\operatorname{Hom}}

\newcommand{\RHom}{\operatorname{\mathbf{R}Hom}}

\newcommand{\Tor}{\operatorname{Tor}}

\newcommand{\cx}{\operatorname{cx}}
\newcommand{\injcx}{\operatorname{injcx}}
\newcommand{\q}{\mathbf{q}}
\newcommand{\m}{\mathfrak{m}}

\newcommand{\x}{\mathbf{x}}
\newcommand{\y}{\mathbf{y}}
\newcommand{\depth}{\operatorname{depth}}

\newcommand{\cidim}{\operatorname{CI-dim}}

\newcommand{\ciuid}{\operatorname{CI^*-id}}

\newcommand{\thick}{\operatorname{thick}}

\renewcommand{\max}{\operatorname{Max}}

\makeatletter
\DeclareFontEncoding{LS2}{}{\@noaccents}
\makeatother
\DeclareFontSubstitution{LS2}{stix}{m}{n}

\DeclareSymbolFont{largesymbolsstix}{LS2}{stixex}{m}{n}

\DeclareMathDelimiter{\lbrbrak}{\mathopen}{largesymbolsstix}{"EE}{largesymbolsstix}{"14}
\DeclareMathDelimiter{\rbrbrak}{\mathclose}{largesymbolsstix}{"EF}{largesymbolsstix}{"15}

\crefname{diagram}{diagram}{diagrams}
\crefname{diagram}{Diagram}{Diagrams}
\creflabelformat{diagram}{(#1) #2 #3}

\newcommand{\hsup}{\operatorname{hsup}}
\newcommand{\p}{\mathfrak{p}}
\newcommand{\A}{\mathcal{A}}
\newcommand{\Spec}{\operatorname{Spec}}
\newcommand{\pd}{\operatorname{pd}}
\newcommand{\hinf}{\operatorname{hinf}}
\newcommand{\qid}{\operatorname{qid}}
\newcommand{\fd}{\operatorname{fd}}

\newcommand{\qpd}{\operatorname{qpd}}
\renewcommand{\H}{\mathrm{H}}

\newcommand{\lotimes}{\otimes^{\mathbf{L}}}
\newcommand{\Supp}{\operatorname{Supp}}
\newcommand{\catb}{\sqsubset\mspace{-13mu}\sqsupset}

\newcommand{\Catsub}[3]{{\mathsf{#2}}_{#3}(#1)}
\newcommand{\Catsupsub}[4]{{\mathsf{#2}}^{\text{\upshape #3}}_{#4}(#1)}
\newcommand{\Catsup}[3]{{\mathsf{#2}}^{\text{\upshape #3}}(#1)}
\newcommand{\Db}[1][R]{\Catsub{#1}{D}{\catb}}
\newcommand{\Dfb}[1][R]{\Catsupsub{#1}{D}{f}{\catb}}

\newcommand{\Df}[1][R]{\Catsup{#1}{D}{f}}

\newcommand{\ciid}{\operatorname{CI-id}}

\newcommand{\cifd}{\operatorname{CI-fd}}

\newcommand{\amp}{\operatorname{amp}}

\newcommand{\CIHid}{\operatorname{CI}_{\mathrm{Hom}}\!\operatorname{-id}}

\newcommand{\perf}{\operatorname{Perf}}
\newcommand{\injperf}{\operatorname{InjPerf}}
\renewcommand{\q}{\mathfrak{q}}
\newcommand{\wh}{\widehat}

\keywords{complete intersection dimension, quasi-projective dimension,
quasi-injective dimension, complete intersection injective dimension,
complexity}

\subjclass[2020]{13D05, 13D07, 13D09}

\author[Dey]{Souvik Dey}
\address[Souvik Dey]{Department of Mathematics \\ University of Arkansas \\ Fayetteville, AR 72701, U.S.A}
\email[]{souvikd@uark.edu}
\urladdr{https://orcid.org/0000-0001-8265-3301}

\author[Ferraro]{Luigi Ferraro}
\address[Luigi Ferraro]{School of Mathematical and Statistical Sciences \\ University of Texas Rio Grande Valley \\ Edinburg, TX 78539, U.S.A}
\email{luigi.ferraro@utrgv.edu}
\urladdr{https://faculty.utrgv.edu/luigi.ferraro}

\author[Gheibi]{Mohsen Gheibi}
\address[Mohsen Gheibi]{Department of Mathematics \\ Florida A\&M University \\ Tallahassee, FL 32307, U.S.A}
\email[]{mohsen.gheibi@famu.edu}

\begin{document}
\title{Complete intersection and quasi-homological dimensions}
\begin{abstract}
We prove that, over a commutative Noetherian ring, every finitely generated module of finite complete intersection dimension has finite quasi-projective dimension. Our proof adapts Bergh’s technique of reducing complexity, originally used to establish virtual smallness for complexes of finite complete intersection dimension over local rings, by successively enlarging the perfect locus of the cones and controlling their homology. This gives an affirmative answer to part of a question of Jorgensen, Takahashi, and the third author. We also establish a dual injective version of our result for rings admitting a dualizing complex. Finally, we show that, over such rings, the three injective analogues of complete intersection dimension appearing in the literature coincide for bounded complexes with finitely generated homology, while two of them coincide even without a dualizing complex. We also give an example showing that the finite-generation hypothesis is necessary, thereby providing a full answer to a question of Sather-Wagstaff. We conclude by studying the localization behavior of these dimensions, obtaining a partial answer to a question of Sather-Wagstaff and Totushek.
\end{abstract}

\maketitle

\section{Introduction}

Homological dimensions provide a fundamental connection between the properties of
modules and the singularities of the rings over which they are defined. For instance,
over a commutative Noetherian local ring, modules of finite projective dimension
exhibit many of the homological properties of modules over regular local rings.
The complete intersection dimension, introduced by Avramov, Gasharov, and Peeva,
extends projective dimension in a way that reflects the homological behavior of
modules over complete intersections. In particular, a local ring is a complete
intersection if and only if every finitely generated module has finite complete
intersection dimension.

More recently, Gheibi, Jorgensen, and Takahashi introduced the
\emph{quasi-projective dimension} in \cite{qpd}. Rather than enlarging the class of
modules allowed in a resolution, quasi-projective dimension enlarges the notion of
a resolution itself. A quasi-projective resolution of a module $M$ is a complex of
projective modules whose nonzero homology modules are finite direct sums of copies
of $M$. This construction arises naturally over complete intersections. Indeed, if
\[
R=Q/(\boldsymbol{x}),
\]
where $\boldsymbol{x}$ is a $Q$-regular sequence, and $M$ is an $R$-module having
finite projective dimension over $Q$, then tensoring a finite projective
$Q$-resolution of $M$ with $R$ yields a finite quasi-projective $R$-resolution of
$M$.

This connection led Gheibi, Jorgensen, and Takahashi to ask
\cite[Question~3.9]{qpd} whether every finitely generated module of finite complete
intersection dimension has finite quasi-projective dimension. The difficulty is
caused by the flat extension appearing in a quasi-deformation. More precisely,
finite complete intersection dimension provides a quasi-deformation
\[
R\longrightarrow R'\longleftarrow Q
\]
such that $R'\otimes_R M$ has finite projective dimension over $Q$. The preceding
construction therefore gives a finite quasi-projective resolution after passing to
$R'$, but it is not known in general whether finiteness of quasi-projective
dimension descends along a flat local homomorphism.

Our first main result gives an affirmative answer to the question without requiring
such a descent result and, in fact, holds over arbitrary commutative Noetherian rings. Namely, for every finitely generated $R$-module $M$, we show
that
\[
\cidim_R M<\infty
\quad\Longrightarrow\quad
\qpd_R M<\infty.
\]
When $R$ is local, the two dimensions are equal whenever they are finite. 

Our approach is inspired by Bergh's complexity-reducing construction
\cite{bergh}, which shows that a homologically finite complex of finite complete
intersection dimension over a local ring is virtually small. We refine and
globalize this construction, retaining control of homology while successively
enlarging the locus on which the resulting complexes are perfect. This allows
us to produce a perfect complex whose homology has the form required for a
quasi-projective resolution.

We also study the injective counterpart. Gheibi introduced the
\emph{quasi-injective dimension} in \cite{qid} as the natural dual of
quasi-projective dimension. Using Grothendieck duality, we obtain a dual version
of our first result: if $R$ is a commutative Noetherian ring admitting a
dualizing complex and $M$ is a finitely generated $R$-module, then
\[
\ciuid_R M<\infty
\quad\Longrightarrow\quad
\qid_R M<\infty.
\]
When $R$ is local, these dimensions agree whenever they are finite.

The injective setting also leads us to compare the three injective analogues
of complete intersection dimension appearing in the literature: complete intersection injective dimension
$\ciid$, upper complete intersection injective dimension $\ciuid$
\cite{ciInj}, and complete intersection Hom-injective dimension $\CIHid$
\cite{cihid}. We devote the final section of the paper to
the relationship among these invariants. If $R$ admits a dualizing complex and $M$ is a bounded complex with
finitely generated homology, we prove
\[
\ciid_R M=\ciuid_R M=\CIHid_R M.
\]
Without assuming the existence of a dualizing complex, we still obtain
\[
\ciid_R M=\ciuid_R M
\]
for every bounded complex with finitely generated homology over a local ring; moreover, if
$\CIHid_RM$ is finite, then all three dimensions coincide.

Together, these results provide a full answer to a question of
Sather-Wagstaff \cite[Question~2.9]{ciInj}: the equality
$\ciid_R M=\ciuid_R M$ holds for bounded complexes with finitely generated
homology, while an example arising from a construction of Ferrand and Raynaud
shows that it can fail without the finite-generation hypothesis.
Finally, we answer a localization question of Sather-Wagstaff and Totushek
\cite[Question~3.9]{cihid} for bounded complexes with finitely generated
homology over rings with a dualizing complex.

The paper is organized as follows. Section~2 contains background and notation.
In Section~3 we relate complete intersection dimension and quasi-projective
dimension. Section~4 develops the corresponding theory for upper complete
intersection injective dimension and quasi-injective dimension. In Section~5
we compare the three complete intersection injective dimensions, study their
behavior under completion and localization, and address the questions mentioned
above.

\begin{ack}
Luigi Ferraro was partly supported by the Simons Foundation grant MPS-TSM-00007849. Discussions with ChatGPT 5.6 Sol helped complete parts of this work. 
\end{ack}

\section{Background}

Throughout the paper, $R$ denotes a commutative Noetherian ring. When $R$ is assumed to be local, we will denote by $\m$ its unique maximal ideal and by $\kk$ its residue field. If $\p$ is a prime ideal of $R$, we denote by $\kk(\p)$ the residue field of $R_\p$.

We use homological indexing for complexes. Given an $R$-complex $M$, set
\[
\sup M=\sup\{i\in\mathbb Z\mid M_i\neq 0\}
\quad\text{and}\quad
\inf M=\inf\{i\in\mathbb Z\mid M_i\neq 0\}.
\]
To distinguish these from the corresponding homological bounds, we write
\[
\hsup M=\sup\{i\in\mathbb Z\mid H_i(M)\neq 0\}
\quad\text{and}\quad
\hinf M=\inf\{i\in\mathbb Z\mid H_i(M)\neq 0\}.
\]

We also set $\amp M=\hsup M-\hinf M$.

The derived category of $R$-modules is denoted by $\D(R)$. We use $\Db$ for the full subcategory consisting of
complexes with bounded homology. We denote by $\Df$ the full subcategory
of complexes whose homology modules are finitely generated in every degree,
and set
\[
\Dfb=\Df\cap\Db.
\]

We next recall the homological dimensions and categorical constructions
that will be used throughout the paper.

\subsection*{Complexity and injective complexity}

Let $M\in\Dfb$. The $i$th Betti number and the $i$th Bass number of $M$
are, respectively,
\[
\beta_i^R(M)=\dim_\kk \Tor_i^R(\kk,M)
\quad\text{and}\quad
\mu_R^i(M)=\dim_\kk \Ext_R^i(\kk,M).
\]
The \emph{complexity} of $M$ measures the polynomial rate of growth of its
Betti numbers and is defined by
\[
\cx_R M
=
\inf\left\{
d\in\mathbb N
\ \middle|\
\beta_i^R(M)\leq a i^{d-1}
\text{ for some }a\in\mathbb R
\text{ and all }i\gg0
\right\}.
\]
Similarly, the \emph{injective complexity} of $M$ is
\[
\injcx_R M
=
\inf\left\{
d\in\mathbb N
\ \middle|\
\mu_R^i(M)\leq a i^{d-1}
\text{ for some }a\in\mathbb R
\text{ and all }i\gg0
\right\}.
\]
Thus $\cx_RM=0$ precisely when the Betti numbers of $M$ eventually vanish,
and $\injcx_RM=0$ precisely when its Bass numbers eventually vanish.

Suppose now that $R$ admits a dualizing complex $D$. For $M\in\Dfb$ we set the following notation
\[
M^\dagger:=\RHom_R(M,D).
\]
Grothendieck duality gives a contravariant equivalence on $\Dfb$, and the
Betti numbers of $M^\dagger$ and the Bass numbers of $M$ have the same
asymptotic growth. Consequently,
\begin{equation}\label{eq:injcx-duality}
\injcx_R M=\cx_R M^\dagger.
\end{equation}
See, for example, \cite[Remark~7.1.2]{AIM}.

\subsection*{Thick subcategories}

We recall some terminology concerning generation in triangulated categories.
A full subcategory $\mathcal T$ of $\D(R)$ is called \emph{thick} if it is closed under shifts, exact triangles, and direct summands.

For $M\in\D(R)$, we denote by
\[
\thick_{\D(R)}M
\]
the smallest thick subcategory of $\D(R)$ containing $M$.  We refer to \cite[Section~2]{bergh} for this terminology and
for an explicit inductive construction of $\thick_{\D(R)}M$.

\subsection*{Quasi-projective dimension}

Let $M$ be a nonzero $R$-module. A \emph{quasi-projective resolution} of
$M$ is a complex $F$ of $R$-modules satisfying the following conditions:
\begin{enumerate}
    \item $F_i=0$ for $i\ll0$;
    \item each $F_i$ is a projective $R$-module;
    \item for every $i\in\mathbb Z$, there is an integer $a_i\geq0$ such that
    \[
    H_i(F)\cong M^{\oplus a_i},
    \]
    and $a_i\neq0$ for at least one $i$.
\end{enumerate}
The \emph{quasi-projective dimension} of $M$ is
\[
\qpd_RM
=
\inf\left\{
\sup F-\hsup F
\ \middle|\
F\text{ is a quasi-projective resolution of }M
\text{ with }\sup F<\infty
\right\}.
\]
By convention, $\qpd_R0=-\infty$.

\subsection*{Quasi-injective dimension}\label{qpdsubsection}

Dually, a \emph{quasi-injective resolution} of a nonzero $R$-module $M$
is a complex $I$ of $R$-modules such that
\begin{enumerate}
    \item $I_i=0$ for $i\gg0$;
    \item each $I_i$ is an injective $R$-module;
    \item for every $i\in\mathbb Z$, there is an integer $a_i\geq0$ for which
    \[
    H_i(I)\cong M^{\oplus a_i},
    \]
    with $a_i\neq0$ for at least one $i$.
\end{enumerate}
The \emph{quasi-injective dimension} of $M$ is defined by
\[
\qid_RM
=
\inf\left\{
\hinf I-\inf I
\ \middle|\
I\text{ is a quasi-injective resolution of }M
\text{ with }\inf I>-\infty
\right\},
\]
and we set $\qid_R0=-\infty$.

\subsection*{Complete intersection dimensions}

We finish by collecting the complete intersection dimensions needed below.
A \emph{quasi-deformation} of $R$ is a diagram of local homomorphisms
\[
R\xrightarrow{\alpha}R'\xleftarrow{p}Q
\]
in which $\alpha$ is flat and $p$ is surjective with kernel generated by a
$Q$-regular sequence. Such a quasi-deformation will be called
\emph{exceptional} if $R'$ has Gorenstein formal fibers and the closed fiber
$R'/\m R'$ is Gorenstein. In particular, the first condition is satisfied
when $R'$ admits a dualizing complex.

For a nonzero $R$-complex $M$, we consider the following invariants:
\begin{align*}
&\cidim_RM
=
\inf\left\{
\pd_Q(R'\otimes_R M)-\pd_QR'
\ \middle|\
R\longrightarrow R'\longleftarrow Q
\text{ is a quasi-deformation}
\right\},\\
&\cifd_RM
=
\inf\left\{
\fd_Q(R'\otimes_R M)-\pd_QR'
\ \middle|\
R\longrightarrow R'\longleftarrow Q
\text{ is a quasi-deformation}
\right\},\\
&\ciid_RM
=
\inf\left\{
\id_Q(R'\otimes_R M)-\pd_QR'
\ \middle|\
R\longrightarrow R'\longleftarrow Q
\text{ is a quasi-deformation}
\right\},\\
&\ciuid_RM
=
\inf\left\{
\id_Q(R'\otimes_R M)-\pd_QR'
\ \middle|\
R\longrightarrow R'\longleftarrow Q
\text{ is an exceptional quasi-deformation}
\right\},\\
&\CIHid_RM=\inf\left\{
\id_Q\RHom_R(R',M)-\pd_QR'
\ \middle|\
R\longrightarrow R'\longleftarrow Q
\text{ is a quasi-deformation}
\right\}.
\end{align*}

The invariant $\ciuid_RM$ is the \emph{upper complete intersection
injective dimension}, introduced in \cite[Definition~2.6]{ciInj}, while
$\CIHid_RM$ is the \emph{complete intersection Hom-injective dimension}
of \cite[Definition~3.1]{cihid}. We will also use the fact that, for
$M\in\Dfb$, one has
\[
\cidim_RM=\cifd_RM.
\]

\section{Complete intersection dimension and quasi-projective dimension}

In this section we will show that a finitely generated module of finite complete intersection dimension has finite quasi-projective dimension. We first prove a preliminary lemma which will be fundamental in proving the result mentioned above. 

\begin{lemma}\label{lem1}
    Let $\A$ be an abelian category. Let $X, Y \in \D(\A)$ be such that there is an exact triangle 
    \[
    X \to \Sigma^nX \to Y \to 
    \]
    where $n>\amp X+1$. Then,
    \[
    \H(Y)\cong\Sigma^n\H(X)\oplus\Sigma\H(X).
    \]
\end{lemma}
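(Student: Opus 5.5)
The plan is to read off $\H(Y)$ degree by degree from the long exact homology sequence of the triangle. The hypothesis $n>\amp X+1$ should make every connecting map vanish and, in each degree, leave at most one nonzero term.

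If $\H(X)=0$, then $X\to\Sigma^nX$ is a map between zero objects, so $Y\cong 0$ and the claim is trivial. Otherwise, since $n>\amp X+1$ forces $\amp X$ to be finite, set $a=\hinf X$ and $b=\hsup X$. Then $H_i(X)\neq 0$ only for $i\in[a,b]$, with $b-a=\amp X$. Recall that $H_i(\Sigma^nX)=H_{i-n}(X)$, which can be nonzero only for $i\in[a+n,b+n]$.

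First, I will show the map $f\colon X\to\Sigma^nX$ is zero on homology. In degree $i$ it is a map $H_i(X)\to H_{i-n}(X)$. A nonzero such map would need both $i\in[a,b]$ and $i\in[a+n,b+n]$. These intervals are disjoint because $a+n>b$, which follows from $n>\amp X$. Hence $H_i(f)=0$ for all $i$.

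Second, the long exact sequence
\[
\cdots\to H_i(X)\xra{0} H_i(\Sigma^nX)\to H_i(Y)\to H_{i-1}(X)\xra{0} H_{i-1}(\Sigma^nX)\to\cdots
\]
then breaks into short exact sequences
\[
0\to H_{i-n}(X)\to H_i(Y)\to H_{i-1}(X)\to 0 .
\]
Here $H_{i-1}(X)=H_i(\Sigma X)$ with the usual sign conventions.

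Third, these sequences need not split a priori; this is the only real point, and it is where the stronger bound $n>\amp X+1$ enters. The left term is nonzero only for $i\in[a+n,b+n]$, and the right term only for $i\in[a+1,b+1]$. These intervals are disjoint exactly when $a+n>b+1$, i.e.\ $n>\amp X+1$. So in each degree at least one outer term vanishes, and $H_i(Y)$ is isomorphic to the other one. Assembling the degrees gives
\[
H_i(Y)\cong H_i(\Sigma^nX)\oplus H_i(\Sigma X)\quad\text{for every } i,
\]
which is the graded isomorphism $\H(Y)\cong\Sigma^n\H(X)\oplus\Sigma\H(X)$.

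This argument is entirely degreewise, so it works in any abelian category $\A$ and needs no splitting of triangles in $\D(\A)$.
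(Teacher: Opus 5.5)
Your proposal is correct and is essentially the paper's proof: both use the long exact homology sequence, note that the maps $\H_i(X)\to\H_{i-n}(X)$ vanish for degree reasons, and then use $n-1>\amp X$ to see that in each short exact sequence $0\to\H_{i-n}(X)\to\H_i(Y)\to\H_{i-1}(X)\to 0$ at least one outer term is zero. Your interval bookkeeping and the separate treatment of $\H(X)=0$ only make explicit what the paper states more tersely.
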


\begin{proof} For every $i\in \mathbb Z$, we have the following exact sequence
    \[
    \H_i(X)\to \H_{i-n}(X)\to \H_i(Y)\to \H_{i-1}(X)\to \H_{i-n-1}(X).
    \]
    For every $i\in \mathbb Z$ either $\H_{i-1}(X)=0$ or $\H_{i-n}(X)=0$, indeed the difference of the indices is $(i-1)-(i-n)=n-1>\amp X$.
    
    Therefore the maps $\H_i(X)\to \H_{i-n}(X)$ and $\H_{i-1}(X)\to \H_{i-n-1}(X)$ are zero. Consequently, the previous exact sequence reduces to
    \[
    0\to \H_{i-n}(X)\to \H_i(Y)\to \H_{i-1}(X)\to 0.
    \]
Since either $\H_{i-1}(X)=0$ or $\H_{i-n}(X)=0$, it follows that $\H_i(Y)$ is isomorphic to either $\H_{i-1}(X)$ or $\H_{i-n}(X)$ and we can write $\H_i(Y)\cong\H_{i-n}(X)\oplus\H_{i-1}(X)$ for every $i\in\mathbb{Z}$, providing the asserted isomorphism.
\end{proof}

In the following two remarks we will collect results that follow from the proof of \cite[Theorem 3.2]{bergh} and that will be used to prove the main theorem of this section.

\begin{rem}\label{rem:bergh}
If $R$ is local and $\cidim_RM<\infty$, then there exists $\eta\in\Ext_R(M,M)$ of positive degree $d$ such that the map induced by multiplication by $\eta$
\[
\Ext_R(M,\kk)\rightarrow\Ext_R(M,\kk)
\]
is eventually injective. The element $\eta$ corresponds to a map $M\xra{f_\eta}\Sigma^dM$ in $\D(R)$.
\end{rem}

The next remark follows by the same argument as in the proof of \cite[Theorem 3.2]{bergh}. 

\begin{rem}\label{rem:cx}
If $R$ is local and there is an exact triangle in $\Dfb$
\[
M\rightarrow\Sigma^nM\rightarrow C\rightarrow
\]
with a positive $n$, such that
\begin{itemize}
\item $\cx_R M<\infty$.
\item The Poincar\'{e} series of $M$ is rational.
\item The map of graded $R$-modules $\Ext_R(M,\kk)\rightarrow\Ext_R(M,\kk)$ induced by $M\rightarrow\Sigma^nM$ is eventually injective.
\end{itemize}
Then, $\cx_RC=\cx_RM-1$. We point out that the first two conditions are satisfied if $\cidim_RM<\infty$.
\end{rem}

We are now ready to prove the main result of this section which strengthens \cite[Theorem 3.2]{bergh}.

We set the following notation for $M\in\Dfb$
\[
\perf M=\{\p\in\Spec R\mid M_\p\in\thick_{\D(R_\p)}R_\p\}.
\]

\begin{theorem}\label{thm:global}
Let $R$ be a commutative Noetherian ring not necessarily local. Let $M\in\Dfb$ with $\cidim_RM<\infty$ and $M\not\in\thick_{\D(R)}R$. Let $\p\not\in\perf M$, then for every integer $n\in[0,\cx_{R_\p}M_\p]$ there exists a nonzero $X_n\in\Dfb$ such that
\begin{enumerate}
\item $X_n\in\thick_{\D(R)}M$.
\item $\cidim_R X_n<\infty$.
\item $\cx_{R_\p}(X_n)_\p=n$.
\item There are positive integers $i_1,\ldots, i_j$ such that
\[
\H(X_n)\cong\Sigma^{i_1}\H(M)\oplus\cdots\oplus\Sigma^{i_j}\H(M).
\]
\item $\Supp_R X_n=\Supp_RM$.
\item If $n\geq1$ there is an exact triangle
\[
X_n\rightarrow\Sigma^{l_n}X_n\rightarrow X_{n-1}\rightarrow
\]
with $l_n>\amp X_n+1$.
\item If $n\geq1$, then $\perf X_n\subseteq \perf X_{n-1}$. 
\end{enumerate}
In particular $\perf M\subsetneq\perf X_0$.
\end{theorem}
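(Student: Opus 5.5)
We argue by descending induction on $n$, starting at $n=c:=\cx_{R_\p}M_\p$ with $X_c:=M$. Properties (1)--(5) are then immediate, with the harmless convention that (4) holds via the summand with shift $0$; if positive shifts are needed, take $X_c=\Sigma M$ instead, which does not affect $\perf$, support, or complexity. Since $\p\notin\perf M$, we have $c\ge 1$: by finiteness of $\cidim$, Betti numbers that eventually vanish would force $M_\p$ to be perfect. Since $\cidim$ localizes, $\cidim_{R_\p}M_\p<\infty$.

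For the inductive step, suppose $X_n$ has been constructed with $n\ge1$. By (2) and localization, $\cidim_{R_\p}(X_n)_\p<\infty$. Remark~\ref{rem:bergh} then gives $\eta\in\Ext_{R_\p}((X_n)_\p,(X_n)_\p)$ of degree $d>0$ that acts eventually injectively on $\Ext((X_n)_\p,\kk(\p))$. Powers of $\eta$ are still eventually injective, so we may replace $\eta$ by $\eta^k$ and assume $d>\amp X_n+1$.

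The key globalization step is to lift $\eta$ to a global morphism. Since $X_n\in\Dfb$, we have
\[
\Ext_{R_\p}((X_n)_\p,(X_n)_\p)\cong \Ext_R(X_n,X_n)_\p .
\]
Hence $\eta=f/s$ with $f\colon X_n\to\Sigma^dX_n$ and $s\notin\p$. The element $f/1$ differs from $\eta$ by a unit, so it is still eventually injective on $\Ext(-,\kk(\p))$. Set $l_n=d$ and define $X_{n-1}$ by the triangle
\[
X_n\xra{f}\Sigma^{l_n}X_n\to X_{n-1}\to .
\]
Property (6) then holds by construction, and (1) holds because the thick subcategory is closed under cones. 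For (2), finite $\cidim$ is closed under cones for complexes in $\Dfb$ with a common quasi-deformation (or via the finite-$\cidim$ two-of-three property; I would cite the standard fact, or check it locally since $\cidim$ is defined locally at maximal ideals). Lemma~\ref{lem1} gives $\H(X_{n-1})\cong\Sigma^{l_n}\H(X_n)\oplus\Sigma\H(X_n)$, and combined with (4) for $X_n$ this yields (4) with positive shifts. Property (4) implies $\Supp X_{n-1}=\Supp X_n=\Supp M$, which is (5), and also nonzeroness. Property (3) follows from Remark~\ref{rem:cx} applied at $\p$, giving $\cx_{R_\p}(X_{n-1})_\p=n-1$. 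Finally, (7) holds because if $(X_n)_\q$ is perfect, then so is the cone $(X_{n-1})_\q$.

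For the last assertion, iterating (7) gives $\perf M=\perf X_c\subseteq\perf X_0$. Moreover, $\cx_{R_\p}(X_0)_\p=0$ together with $\cidim<\infty$ makes $(X_0)_\p$ have finite projective dimension, hence perfect. Thus $\p\in\perf X_0\setminus\perf M$, and the inclusion is strict.

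The main obstacle is the globalization: ensuring the localized morphism still satisfies the hypotheses of Remark~\ref{rem:cx}, in particular rationality of the Poincaré series and eventual injectivity after clearing denominators. A second concern is verifying that $\cidim$ stays finite for cones globally. For the latter I would first try to show directly that $(X_{n-1})_\m$ has finite $\cidim$ for every maximal $\m$, using a quasi-deformation of $R_\m$ that works for $(X_n)_\m$. Over that quasi-deformation, the cone of a map between complexes of finite projective dimension over $Q$ again has finite projective dimension.
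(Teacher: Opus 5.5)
Your proposal is correct and follows the paper's proof essentially step by step: lift $\eta$ through $\Ext_R(X_n,X_n)_\p\cong\Ext_{R_\p}((X_n)_\p,(X_n)_\p)$, pass to a power of degree exceeding $\amp X_n+1$, take the cone, and read off (3) from Remark~\ref{rem:cx} and (4) from Lemma~\ref{lem1}. Your choice $X_c=\Sigma M$ correctly repairs the harmless fact that $X_c=M$ satisfies (4) only with shift $0$.

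For (2), do not rely on a general two-out-of-three property for finite CI-dimension, since that is not available. The argument that works is the one you sketch at the end. A single quasi-deformation for $(X_n)_\m$ serves both ends of the triangle; this is what the paper invokes as \cite[Lemma 3.1]{bergh}. It gives $\cidim_{R_\m}(X_{n-1})_\m\le\cidim_{R_\m}(X_n)_\m+l_n$, so the supremum over maximal ideals stays finite.
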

\begin{proof}
Note that $\cx_{R_\p}M_\p>0$. We set $X_{\cx_{R_\p}M_\p}=M$. Assume that $X_n$ has been constructed and that $n>0$, we show how to construct $X_{n-1}$.

We choose $\eta\in\Ext_{R_\p}((X_n)_\p,(X_n)_\p)$ of positive degree $d_n$ as in \Cref{rem:bergh}. Let $N$ be such that $l_n\colonequals Nd_n>\amp X_n+1$. Regard $\eta$ as an element of $\Ext_R(X_n,X_n)_\p$, then there is a $\nu\in\Ext_R(X_n,X_n)$ and $s\not\in\p$ such that $\nu_\p=s\eta$. Complete $f_{\nu^N}$ to an exact triangle
\[
X_n\rightarrow\Sigma^{l_n}X_n\rightarrow X_{n-1}\rightarrow
\]
We show that $X_{n-1}$ has the desired properties. Note that $X_{n-1}$ is nonzero, otherwise $X_n$ would be isomorphic to $\Sigma^{l_n}X_n$, which is not possible.

To prove (1) it suffices to notice that
\[
X_{n-1}\in\thick_{\D(R)}X_n\subseteq\thick_{\D(R)}M.
\]

Part (1) and \cite[Lemma 3.1]{bergh} applied to the localization of the triangle above shows that $\cidim_RX_{n-1}<\infty$, proving (2).

Since $s$ is a unit in $R_\p$, it follows that the map $\Ext_{R_\p}((X_{n})_\p,\kk(\p))\rightarrow\Ext_{R_\p}((X_{n})_\p,\kk(\p))$ induced by multiplication by $\nu_\p^N$ is eventually injective, therefore (3) follows from \Cref{rem:cx} and the assumptions on $X_n$.

By \Cref{lem1} 
\[
\H(X_{n-1})\cong\Sigma^{l_n}\H(X_n)\oplus\Sigma\H(X_n),
\]
and since $X_n$ satisfies (4), so does $X_{n-1}$.

Condition (5) holds by \cite[Lemma 2.4.2]{josh}, but it also follows directly from (4).

Part (6) follows directly from our construction.

If $\q\in\perf X_n$, then the exact triangle in (6) shows that $\q\in\perf X_{n-1}$, yielding (7).

In particular $\perf M\subseteq\perf X_0$, but since $\cx_{R_\p}(X_0)_\p=0$, it follows that $\p\in\perf X_0$, giving the strict containment.
\end{proof}

\begin{cor}
Let $R$ be a commutative Noetherian ring not necessarily local. Let $M$ be a finitely generated $R$-module. If $\cidim_RM<\infty$, then $\qpd_RM<\infty$.
\end{cor}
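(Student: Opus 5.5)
Regard $M$ as a complex concentrated in degree $0$, so that $M\in\Dfb$ and $\H(M)=M$. If $M$ has finite projective dimension, equivalently $M\in\thick_{\D(R)}R$, a finite projective resolution is already a finite quasi-projective resolution. (If $M$ is finitely generated, perfect at every prime and has bounded local projective dimensions, it is perfect; in any case membership in $\thick_{\D(R)}R$ means a bounded complex of finitely generated projectives quasi-isomorphic to $M$.) So assume $M\notin\thick_{\D(R)}R$. The plan is to iterate \Cref{thm:global} so that the perfect locus grows, and use Noetherianity of $\Spec R$ to stop.

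\textbf{Iteration.} Set $M^{(0)}=M$. If $M^{(k)}\notin\thick_{\D(R)}R$, then $\perf M^{(k)}\neq\Spec R$. Indeed, a complex in $\Dfb$ that is perfect at every localization is perfect, since it has finite projective dimension locally with the bound achieved on a finite open cover. Pick $\p\notin\perf M^{(k)}$ and apply \Cref{thm:global} to get $M^{(k+1)}:=X_0$. By (2) it satisfies $\cidim_R M^{(k+1)}<\infty$, so it may be fed back into the theorem. By (4) its homology is a finite direct sum of positive shifts of $\H(M^{(k)})$, hence inductively of shifts of $M$ with positive total shift. By the last assertion, $\perf M^{(k)}\subsetneq\perf M^{(k+1)}$.

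\textbf{Termination.} The perfect locus $\perf X$ is open for $X\in\Dfb$: a finite free resolution at $\p$ spreads to a neighbourhood, which is standard for complexes with finitely generated homology via the vanishing of $\Tor$ with residue fields or by truncating a resolution. So the complements form a strictly decreasing chain of closed subsets of the Noetherian space $\Spec R$, which must stabilize. Hence some $M^{(k)}$ lies in $\thick_{\D(R)}R$. This is the main point to check carefully: the strictness is provided by the theorem, and openness is the needed input.

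\textbf{Conclusion.} The final $M^{(k)}$ is then quasi-isomorphic to a bounded complex $F$ of finitely generated projective modules, with $\H_i(F)\cong M^{\oplus a_i}$ and not all $a_i$ zero, since $M^{(k)}$ is nonzero. Thus $F$ is a quasi-projective resolution with $\sup F<\infty$, and $\qpd_RM\le \sup F-\hsup F<\infty$. If $M=0$ the statement is trivial by convention.
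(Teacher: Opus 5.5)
Your proposal is correct and follows the paper's proof essentially step for step. In both, you iterate \Cref{thm:global} to strictly enlarge the open locus $\perf$, use Noetherianity of $\Spec R$ to reach a nonzero complex in $\thick_{\D(R)}R$ whose homology modules are finite direct sums of copies of $M$, and take its bounded projective resolution as a quasi-projective resolution; the only difference is that you sketch the openness of $\perf$ and the passage from locally perfect to perfect yourself, whereas the paper cites \cite[Lemma 2.3]{SriRyo} and \cite[Theorem 17.3.28]{LarsBook} for these.
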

\begin{proof}
If $M\in\thick_{\D(R)}R$, then there is nothing to prove, so assume the contrary. Set $Y_0=M$, then by \Cref{thm:global} there is a $Y_1\in\Dfb$ such that 
\begin{enumerate}
\item $\cidim_RY_1<\infty$,
\item $\H_i(Y_1)\cong M^{a_{i,1}}$ for some $a_{i,1}$ not all zero. 
\item $\perf Y_0\subsetneq\perf Y_1$.
\end{enumerate}
If $Y_1\not\in\thick_{\D(R)}R$, then applying \Cref{thm:global} to $Y_1$ and iterating possibly infinitely many times, yields a sequence $Y_0,Y_1,Y_2,\ldots$ of objects in $\Dfb$ such that
\begin{enumerate}
\item $\cidim_RY_t<\infty$,
\item $\H(Y_t)\cong M^{a_{i,t}}$ for some $a_{i,t}$ not all zero.
\item $\perf Y_{t-1}\subsetneq\perf Y_t$.
\end{enumerate}
Since $\perf Y_t$ is open by \cite[Lemma 2.3]{SriRyo} for all $t$ and $\Spec R$ is Noetherian, it follows that this ascending chain stabilizes, i.e. there is a $t$ such that $\perf Y_t=\Spec R$. Therefore $Y_t$ is locally perfect, and by \cite[Theorem 17.3.28]{LarsBook} it follows that so is $Y_t$. A bounded semi-projective resolution of $Y_t$ is a bounded quasi-projective resolution of $M$, yielding $\qpd_RM<\infty$.
\end{proof}

The previous corollary answers the last part of \cite[Question 3.9]{qpd}.

\begin{cor}
If $R$ is local and $M$ a finitely generated $R$-module with $\cidim_RM<\infty$, then 
\[
\cidim_RM=\qpd_RM.
\]
\end{cor}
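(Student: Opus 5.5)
Since the preceding corollary gives $\qpd_RM<\infty$, the plan is to show both quantities equal $\depth R-\depth_R M$. The zero module is trivial, so assume $M\neq 0$. For complete intersection dimension this is the Auslander--Buchsbaum type formula of Avramov--Gasharov--Peeva: $\cidim_RM=\depth R-\depth_RM$ whenever $\cidim_RM<\infty$. So it suffices to establish the corresponding formula $\qpd_RM=\depth R-\depth_RM$ whenever $\qpd_RM<\infty$. This is proved in \cite{qpd} (the Auslander--Buchsbaum formula for quasi-projective dimension), and I would simply invoke it. For completeness I would also sketch why it holds.

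Let $F$ be a bounded quasi-projective resolution of $M$, chosen minimal over the local ring $R$; minimality is possible by splitting off contractible summands. Set $s=\hsup F$ and $p=\sup F$, and let $h=\hinf F$ be the least degree with $\H_h(F)\neq0$. The goal is $p-s=\depth R-\depth M$. We have $\H(F)\cong\bigoplus_i\Sigma^i M^{a_i}$, so $\depth_R F$, in the sense of $\RHom(\kk,-)$, should be computed in two ways.

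First, by the Auslander--Buchsbaum formula for perfect complexes (Foxby--Iyengar), $\depth_R F=\depth R-\sup F$ in homological indexing, i.e.\ $\depth R-p$.

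Second, I would compute $\depth_R F$ from the homology. One always has the inequality $\depth F\geq\min_i(\depth \H_i(F)-i)$. Because every homology module has the same depth, $\depth M$, the term $\H_s(F)$ contributes with $-\depth_R F=\sup\{i-\depth\H_i\}$, and I expect equality $\depth F=\depth M-s$. The argument is the standard one: the top homology detects $\Ext$ in the lowest degree via the spectral sequence, since no cancellation occurs at the extreme index $s$. Comparing the two computations gives $p-s=\depth R-\depth M$.

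The main obstacle is this depth identity for a complex whose homologies all have equal depth. The danger is that differentials in the hyperext spectral sequence $\Ext^a(\kk,\H_b(F))\Rightarrow\Ext(\kk,F)$ could kill the extreme corner term. I would argue the corner survives: the lowest nonzero $\Ext^{\depth M}(\kk,\H_s)$ sits in the extreme position, so no differential can reach or leave it. Failing that, I would cite the corresponding result from \cite{qpd}.
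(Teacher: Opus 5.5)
Your proposal is correct and is essentially the paper's proof: finiteness of $\qpd_RM$ comes from the preceding corollary, and then $\cidim_RM$ and $\qpd_RM$ both equal $\depth R-\depth_RM$ by their respective Auslander--Buchsbaum formulas (Avramov--Gasharov--Peeva for the first, \cite{qpd} for the second). Your supplementary sketch of the formula for $\qpd$ is also sound: in the spectral sequence, the corner term $\Ext^{\depth M}_R(\kk,\H_s(F))$ could only receive differentials from terms $\Ext^{<\depth M}_R(\kk,-)$, which vanish, and could only send differentials to homology in degrees above $s$, which is zero, so it survives.
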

\begin{proof}
This follows immediately from the previous corollary since both dimensions satisfy an Auslander-Buchsbaum formula.
\end{proof}

\begin{rem}
The flat descent problem for quasi-projective dimension from \cite[Question 3.9]{qpd} remains an open problem, even for $R\rightarrow\wh R$.
\end{rem}

\section{Upper complete intersection injective dimension and quasi-injective dimension}
In this section we explore dual statements to the ones obtained in the previous section. We define the upper complete intersection dimension over rings that are not necessarily local by
\[
\ciuid_RM\colonequals\sup\{\ciuid_{R_\m}M_\m\mid\m\in\max R\}.
\]
For a ring with a dualizing complex $D$ we set the following notation
\[
\injperf M=\{\p\in\Spec R\mid M_\p\in\thick_{\D(R_\p)}D_\p\},
\]
and since $\injperf M=\perf\RHom_R(M,D)$, it follows that these sets are open by \cite[Lemma 2.3]{SriRyo}.

We start with a dual result to \Cref{thm:global}.

\begin{theorem}\label{thm:globalinj}
Let $R$ be a commutative Noetherian ring (not necessarily local) with a dualizing complex $D$. Let $M\in\Dfb$ with $\ciuid_RM<\infty$ and $M\not\in\thick_{\D(R)}D$. Let $\p\not\in\injperf M$, then for every integer $n\in[0,\injcx_{R_\p}M_\p]$ there exists a nonzero $X_n\in\Dfb$ such that
\begin{enumerate}
\item $X_n\in\thick_{\D(R)}M$.
\item $\ciuid_R X_n<\infty$.
\item $\injcx_{R_\p}(X_n)_\p=n$.
\item There are positive integers $i_1,\ldots, i_j$ such that
\[
\H(X_n)\cong\Sigma^{i_1}\H(M)\oplus\cdots\oplus\Sigma^{i_j}\H(M).
\]
\item $\Supp_R X_n=\Supp_RM$.
\item If $n\geq1$ there is an exact triangle
\[
X_n\rightarrow\Sigma^{l_n}X_n\rightarrow X_{n-1}\rightarrow
\]
with $l_n>\amp X_n+1$.
\item If $n\geq1$, then $\injperf X_n\subseteq \injperf X_{n-1}$. 
\end{enumerate}
In particular $\injperf M\subsetneq\injperf X_0$.
\end{theorem}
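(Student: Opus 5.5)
The approach is to dualize via $(-)^\dagger=\RHom_R(-,D)$, apply \Cref{thm:global} to $M^\dagger$, and dualize back. The homology condition (4) is not preserved by duality, so instead of transporting condition (4) I will run the induction of \Cref{thm:global} directly on $M$. The input for each step is obtained from the dual side.

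Set $X_{\injcx_{R_\p}M_\p}=M$. Since $\p\notin\injperf M=\perf M^\dagger$, we have $\injcx_{R_\p}M_\p=\cx_{R_\p}M^\dagger_\p>0$ by \eqref{eq:injcx-duality}, so the induction starts at a positive index. Given $X_n$ with $n>0$, I will check two things.

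\emph{Step 1: the key input.} I need $\cidim_{R_\p}(X_n^\dagger)_\p<\infty$ whenever $\ciuid_R X_n<\infty$ (localized at $\p$). For this I would use the Sather-Wagstaff duality results for $\ciuid$ from \cite{ciInj}: over a local ring with a dualizing complex, $\ciuid M<\infty$ iff $\cidim M^\dagger<\infty$. This is the step I expect to be the main obstacle. It needs the exceptional quasi-deformation to be compatible with dualizing complexes. It also needs localization of $\ciuid$: the finiteness is assumed only at maximal ideals, and it must be passed to $\p$. I would try to deduce localization on the dual side, from localization of $\cidim$ for $M^\dagger$, which is known.

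\emph{Step 2: the inductive construction.} Given Step 1, \Cref{rem:bergh} provides $\eta\in\Ext_{R_\p}((X_n^\dagger)_\p,(X_n^\dagger)_\p)$ of degree $d_n>0$ acting eventually injectively on $\Ext((X_n^\dagger)_\p,\kk(\p))$. Since $(-)^\dagger$ is a contravariant equivalence on $\Dfb$, $\Ext(X_n^\dagger,X_n^\dagger)\cong\Ext(X_n,X_n)$. Hence $\eta$ corresponds to an element of $\Ext_{R_\p}((X_n)_\p,(X_n)_\p)$ of the same degree. As in \Cref{thm:global}, I will clear denominators to get $\nu\in\Ext_R(X_n,X_n)$ with $\nu_\p=s\eta$, $s\notin\p$. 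I then choose $N$ with $l_n=Nd_n>\amp X_n+1$ and take the cone $X_{n-1}$ of $f_{\nu^N}\colon X_n\to\Sigma^{l_n}X_n$.

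\emph{Verifying (1)--(7).} Condition (1) is clear. Condition (4) follows from \Cref{lem1} applied to the triangle, and (5) follows from (4). Condition (6) holds by construction. For (7), note that $\thick D_\q$ is closed under cones. For (3), dualizing the triangle gives $(\Sigma^{l_n}X_n)^\dagger\to X_n^\dagger\to\Sigma(X_{n-1})^\dagger\to$, up to shift and rotation. This is a triangle of the form in \Cref{rem:cx}, whose map induces the eventually injective action of $s^N\eta^N$. So $\cx (X_{n-1}^\dagger)_\p=\cx(X_n^\dagger)_\p-1$, and \eqref{eq:injcx-duality} gives (3). For (2), $X_{n-1}^\dagger\in\thick X_n^\dagger$, so \cite[Lemma 3.1]{bergh} gives finite $\cidim$ of $X_{n-1}^\dagger$ locally at each maximal ideal. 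Transferring back by the same duality as in Step 1 gives $\ciuid_R X_{n-1}<\infty$.

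Finally, $\injcx(X_0)_\p=0$ means $(X_0^\dagger)_\p$ is perfect, that is, $\p\in\injperf X_0$. Together with (7), this gives the strict containment $\injperf M\subsetneq\injperf X_0$.
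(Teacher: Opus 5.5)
Your proposal is correct and is essentially the paper's argument. The paper takes the cone of $f_{\nu^N}\colon X_n^\dagger\to\Sigma^{l_n}X_n^\dagger$ on the dual side and sets $X_{n-1}=\Sigma^{l_n+1}Y_{n-1}^\dagger$, whereas you transport $\eta$ through $\Ext(X_n^\dagger,X_n^\dagger)\cong\Ext(X_n,X_n)$ and take the cone on $X_n$ directly, which gives an isomorphic triangle; your Step 1 is settled as you propose (the paper just cites \cite[Corollary 4.6(b)]{ciInj}), by applying that duality at a maximal ideal containing $\p$ and then localizing $\cidim$.
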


\begin{proof}
Note that $\injcx_{R_\p}M_\p>0$. We set $X_{\injcx_{R_p}M_\p}=M$. Assume that $X_n$ has been constructed and that $n>0$, we show how to construct $X_{n-1}$. Whenever we apply $(-)^\dagger$ to an object of $\D(R_\p)$ we mean $\RHom_{R_\p}(-,D_\p)$. By \cite[Corollary 4.6(b)]{ciInj} $\cidim_R(X_n)_\p^\dagger<\infty$. We choose 
\[
\eta\in\Ext_{R_\p}((X_n)_\p^\dagger,(X_n)_\p^\dagger)
\]
of positive degree $d_n$ as in \Cref{rem:bergh}. Let $N$ be such that $l_n\colonequals Nd_n>\amp X_n+1$. Regard $\eta$ as an element of $\Ext_R(X_n^\dagger, X_n^\dagger)_\p$, then there is a $\nu\in\Ext_R(X_n^\dagger,X_n^\dagger)$ and $s\not\in\p$ such that $\nu_\p=s\eta$. Complete $f_{\nu^N}$ to an exact triangle
\[
X_n^\dagger\rightarrow\Sigma^{l_n}X_n^\dagger\rightarrow Y_{n-1}\rightarrow.
\]
We set $X_{n-1}\colonequals \Sigma^{l_n+1}Y_{n-1}^\dagger$ and apply the triangulated functor $(-)^\dagger$ to the previous exact triangle. After invoking Grothendieck duality and shifting, this yields the following exact triangle
\begin{equation}\label{eq:triangle3}
X_n\rightarrow\Sigma^{l_n}X_n\rightarrow\ X_{n-1}\rightarrow.
\end{equation}
We show that $X_{n-1}$ satisfies the required properties.

We note that $X_{n-1}\in\thick_{\D(R)}X_n\subseteq\thick_{\D(R)}M$, proving (1).

By \cite[Lemma 3.1]{bergh} $\cidim_R Y_{n-1}<\infty$. It now follows from \cite[Corollary 4.6(a)]{ciInj} and Grothendieck duality that $\ciuid_RX_{n-1}<\infty$, thus (2) follows.

Since $s$ is a unit in $R_\p$, it follows that the map $\Ext_{R_\p}((X_{n})^\dagger_\p,\kk(\p))\rightarrow\Ext_{R_\p}((X_{n})^\dagger_\p,\kk(\p))$ induced by multiplication by $\nu_\p^N$ is eventually injective.It follows from \Cref{rem:cx} that $\cx_{R_\p}(Y_{n-1})_\p=\cx_R(X_n)_\p^\dagger-1$. We note that
\[
\injcx_{R_\p}(X_{n-1})_\p=\cx_{R_\p}(Y_{n-1})_\p,\quad \injcx_{R_\p} (X_n)_\p=\cx_{R_\p}(X_n)_\p^\dagger,
\]
yielding (3).

Applying \Cref{lem1} to \eqref{eq:triangle3} one gets
\[
\H(X_{n-1})\cong\Sigma^l\H(X_n)\oplus\Sigma\H(X_n),
\]
and therefore (4) follows by induction.

Condition (5) holds by \cite[Lemma 2.4.2]{josh}, but it also follows directly from (4).

Part (6) is immediate from our construction.

If $\q\in\injperf X_n$, then $(X_n)_\q\in\thick_{\D(R_\q)}D_\q$. By \eqref{eq:triangle3} $(X_{n-1})_\q\in\thick_{\D(R_\q)}(X_n)_\q\subseteq\thick_{\D(R_\q)}D_\q$ yielding (7).

In particular $\injperf M\subsetneq\injperf X_0$ since $\injcx_{R_\p}(X_0)_\p=0$, and therefore $\p\in\injperf X_0$,
\end{proof}

\begin{cor}\label{cor:ciuid}
Let $R$ be a commutative Noetherian ring not necessarily local. Let $M$ be a finitely generated $R$-module. If $R$ has a dualizing complex and $\ciuid_RM<\infty$, then $\qid_RM<\infty$.
\end{cor}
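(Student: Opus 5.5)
We mirror the proof of the first corollary of Section~3, with \Cref{thm:globalinj} in place of \Cref{thm:global}. The sets $\injperf$ replace $\perf$, and $\thick_{\D(R)}D$ replaces $\thick_{\D(R)}R$. If $M\in\thick_{\D(R)}D$, we go directly to the final step below. Otherwise set $Y_0=M$, pick $\p\notin\injperf Y_0$, and apply \Cref{thm:globalinj} with $n$ running down to $0$. This gives $Y_1:=X_0\in\Dfb$ with $\ciuid_RY_1<\infty$, with $\H_i(Y_1)\cong M^{\oplus a_{i,1}}$ (not all $a_{i,1}$ zero, by (4), since $M$ is a module), and with $\injperf Y_0\subsetneq\injperf Y_1$. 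Since $\H(Y_1)$ is a direct sum of shifts of $\H(M)$, and the recursion in \Cref{thm:globalinj} only uses (4) for the previous step, iterating keeps the homology a sum of shifted copies of $M$. Thus as long as $Y_t\notin\thick_{\D(R)}D$ we can continue, obtaining a strictly ascending chain $\injperf Y_0\subsetneq\injperf Y_1\subsetneq\cdots$.

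Each $\injperf Y_t=\perf Y_t^\dagger$ is open, and $\Spec R$ is Noetherian, so the chain must stop. It can only stop when $\injperf Y_t=\Spec R$. Indeed, if some $\p\notin\injperf Y_t$, then \Cref{thm:globalinj} applies again, because $Y_t$ has finite $\ciuid$ and is not in $\thick_{\D(R)}D$ (membership in $\thick_{\D(R)}D$ would force every localization into $\thick D_\p$).

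Then $Y_t^\dagger$ is locally perfect, hence perfect by \cite[Theorem 17.3.28]{LarsBook}. Applying Grothendieck duality, $Y_t\simeq (Y_t^\dagger)^\dagger\in\thick_{\D(R)}D$. So $Y_t$ is isomorphic in $\D(R)$ to a complex built from $D$, and therefore has finite injective dimension: $D$ has finite injective dimension, and this property is preserved under shifts, cones and summands. Take a bounded semi-injective resolution $I$ of $Y_t$, i.e.\ a complex of injectives with $\inf I>-\infty$ and $I_i=0$ for $i\gg0$, quasi-isomorphic to $Y_t$. Its homology modules are $M^{\oplus a_{i,t}}$, not all zero, so $I$ is a quasi-injective resolution of $M$ with $\inf I>-\infty$. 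Hence $\qid_RM<\infty$.

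The main obstacle is the bookkeeping over a non-local ring. First, $\ciuid_R$ is defined as a supremum over maximal ideals, and \Cref{thm:globalinj}(2) must be read accordingly. Second, the passage from ``$Y_t^\dagger$ perfect'' to ``$Y_t$ has a bounded injective resolution'' needs $D$ to have finite injective dimension. This holds by the definition of a dualizing complex, but it must be checked that $\thick_{\D(R)}D$ consists of complexes with bounded injective resolutions even when $\dim R$ is infinite. The claim is that a finite number of cones and shifts of $D$ suffices, so boundedness is preserved. Everything else is a formal dual of the projective case.
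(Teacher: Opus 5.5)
Your proposal is correct and follows essentially the same route as the paper: iterate \Cref{thm:globalinj} to obtain a strictly ascending chain of open sets $\injperf Y_t$, which must reach $\Spec R$ by Noetherianity, and then use \cite[Theorem 17.3.28]{LarsBook} together with Grothendieck duality to see that $Y_t$ has finite injective dimension, so that a bounded semi-injective resolution of $Y_t$ is a quasi-injective resolution of $M$. Your worry about infinite Krull dimension is unnecessary, since a Noetherian ring with a dualizing complex has finite Krull dimension, but your finite-number-of-cones argument handles it correctly anyway.
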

\begin{proof}
Let $D$ be a dualizing complex for $R$. If $M\in\thick_{\D(R)}D$, then there is nothing to prove, so assume the contrary. Set $Y_0=M$, then by \Cref{thm:globalinj} there is a $Y_1\in\Dfb$ such that 
\begin{enumerate}
\item $\ciuid_RY_1<\infty$,
\item $\H_i(Y_1)\cong M^{a_{i,1}}$ for some $a_{i,1}$ not all zero. 
\item $\injperf Y_0\subsetneq\injperf Y_1$.
\end{enumerate}
If $Y_1\not\in\thick_{\D(R)}D$, then applying \Cref{thm:globalinj} to $Y_1$ and iterating possibly infinitely many times, yields a sequence $Y_0,Y_1,Y_2,\ldots$ of objects in $\Dfb$ such that
\begin{enumerate}
\item $\ciuid_RY_t<\infty$,
\item $\H(Y_t)\cong M^{a_{i,t}}$ for some $a_{i,t}$ not all zero.
\item $\injperf Y_{t-1}\subsetneq\injperf Y_t$.
\end{enumerate}
Since $\injperf Y_t$ is open for all $t$ and $\Spec R$ is Noetherian, it follows that this ascending chain stabilizes, i.e. there is a $t$ such that $\injperf Y_t=\Spec R$. Therefore $\id_{R_\q}(Y_t)_\q<\infty$ for all $\q\in\Spec R$, and therefore $\id_RY_t<\infty$ by \cite[Theorem 17.3.28]{LarsBook} and Grothendieck duality. A bounded semi-injective resolution of $Y_t$ is a bounded quasi-injective resolution of $M$, yielding $\qid_RM<\infty$.
\end{proof}

\begin{cor}\label{cor:ci*id&qid}
If $R$ is a local ring with a dualizing complex and $M$ a finitely generated $R$-module with $\ciuid_RM<\infty$, then 
\[
\ciuid_RM=\qid_RM.
\]
\end{cor}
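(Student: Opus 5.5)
The plan mirrors the proof of the analogous corollary for $\cidim$ and $\qpd$. Both invariants are finite: $\qid_RM<\infty$ by \Cref{cor:ciuid}, and $\ciuid_RM<\infty$ by hypothesis. So it suffices to show that each of them equals the same numerical invariant of $M$. The natural candidate is $\depth R$.

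The first step is to recall the Bass-type formula for quasi-injective dimension from \cite{qid}: for a nonzero finitely generated module $M$ over a local ring with $\qid_RM<\infty$, one has $\qid_RM=\depth R$. This is the dual of the Auslander--Buchsbaum formula $\qpd_RM=\depth R-\depth M$.

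The second step is the corresponding formula for $\ciuid$. For a finitely generated module with $\ciuid_RM<\infty$, \cite{ciInj} shows that $\ciuid_RM=\depth R$; more generally, for complexes, $\ciuid_RM=\depth R-\inf M$. If that reference is unavailable in this exact form, I would derive it from a quasi-deformation $R\to R'\leftarrow Q$ realizing finiteness. The Bass formula over $Q$ gives
\[
\id_Q(R'\otimes_RM)=\depth Q-\inf(R'\otimes_RM)=\depth Q-\inf M.
\]
We also have $\pd_QR'=\depth Q-\depth R'$ by Auslander--Buchsbaum. Subtracting, the quantity in the definition of $\ciuid$ equals $\depth R'-\inf M$, and then we would need to compare $\depth R'$ with $\depth R$.

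That comparison is the main obstacle. Since $\depth R'=\depth R+\depth(R'/\m R')$, the value $\depth R'-\inf M$ depends on the chosen quasi-deformation, so the infimum must be analysed carefully. This is exactly where the normalization in Sather-Wagstaff's definition and the accompanying formula \cite{ciInj} are needed. I would therefore cite that formula directly rather than rederive it. As an alternative, one could argue through duality: $\ciuid_RM<\infty$ if and only if $\cidim_RM^\dagger<\infty$ by \cite[Corollary 4.6]{ciInj}, and one can combine this with the Auslander--Buchsbaum formula for $\cidim$ together with $\depth M^\dagger$ computed via local duality.

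Finally, both formulas give the value $\depth R$ for a module $M$, since $\inf M=0$. Hence $\ciuid_RM=\qid_RM$.
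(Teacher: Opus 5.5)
Your proposal is correct and follows the paper's argument: finiteness of $\qid_RM$ comes from \Cref{cor:ciuid}, and then the two Bass formulas, $\ciuid_RM=\depth R-\inf M=\depth R$ and $\qid_RM=\depth R$, give the equality. Your detour through quasi-deformations or duality is unnecessary; the paper simply cites these two formulas, as you end up doing.
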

\begin{proof}
This follows immediately from the previous corollary since both dimensions satisfy a Bass formula, see \cite[Proposition 2.11(b)]{ciInj} and \cite[Theorem 3.2]{qid}.
\end{proof}

\begin{question}
Does \Cref{cor:ci*id&qid} hold for rings that do not necessarily have a dualizing complex? A possible approach would be to reduce to the completion by provig that $\qid_{\wh R}\wh M<\infty$ implies $\qid_RM<\infty$. More generally we ask whether flat descent holds for the quasi-injective dimension.
\end{question}

\section{Comparisons of complete intersection injective dimensions}

It is natural to ask whether \Cref{thm:globalinj} and therefore \Cref{cor:ciuid} hold if $\CIHid_RM<\infty$ or $\ciid_RM<\infty$, where these dimensions are defined over nonlocal rings as
\[
\CIHid_RM\colonequals\sup\{\CIHid_{R_\m}M_\m\mid\m\in\max R\},
\]
\[
\ciid_RM\colonequals\sup\{\ciid_{R_\m}M_\m\mid\m\in\max R\}.
\]
We will show that this is indeed the case since under the hypotheses of \Cref{thm:globalinj} one has
\[
\ciuid_RM=\CIHid_RM=\ciid_RM.
\]
It suffices to establish the equality above over local rings.

We first start by exploring the behavior of the complete intersection Hom-injective dimension under Grothendieck duality. 

\begin{theorem}\label{thm:duality}
Let $R$ be a local ring. Let $D$ be a normalized dualizing complex of $R$, and let $M\in\Dfb$. Then,
\begin{enumerate}
\item $\CIHid_RM=\cidim_RM^\dagger$.
\item $\cidim_RM=\CIHid_RM^\dagger$.
\end{enumerate}
\end{theorem}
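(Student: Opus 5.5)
We would prove (1) directly from the definitions by transporting quasi-deformations across Grothendieck duality, and then deduce (2) from (1) applied to $M^\dagger$, using $M^{\dagger\dagger}\simeq M$ in $\Dfb$.

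For (1), fix a quasi-deformation $R\to R'\leftarrow Q$. Since $R'$ is flat over $R$, the complex $D':=R'\otimes_R D$ is a dualizing complex for $R'$ only when the closed fiber is Gorenstein, so we cannot simply base change $D$. Instead we work with the adjunction isomorphism
\[
\RHom_R(R',M)\simeq \RHom_R(R',\RHom_R(M^\dagger,D))\simeq \RHom_R(R'\otimes_R M^\dagger, D)\simeq \RHom_{R'}(R'\otimes_R M^\dagger,\RHom_R(R',D)).
\]
The first key step is therefore to compare $\id_Q\RHom_{R'}(N,\RHom_R(R',D))$ with $\fd_Q N$ for $N=R'\otimes_R M^\dagger$. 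Write $E=\RHom_R(R',D)$. Since $R\to R'$ is flat and $D$ is a bounded complex of injectives, $E$ is a bounded complex of injective $R'$-modules; it is a faithfully injective-type object whose Matlis-type dual behaves like a "dualizing module for flat base change". Over $Q$, via $Q\to R'$ with $\pd_Q R'<\infty$, one has $\id_Q E<\infty$. Writing $I_Q$ for the injective-resolution side, the formula $\RHom_{R'}(N,E)\simeq \RHom_Q(N,\RHom_{R'}(R',E))$ and the standard inequality $\id_Q\RHom_Q(N,E)\le \fd_Q N+\id_Q E$ give one direction. The other direction uses that $E$ is a faithful injective object on which $\Tor$ and $\Ext$ are dual:
\[
\Tor^Q_i(k_Q,N)^{\vee}\cong \Ext^i_Q(k_Q,\RHom_Q(N,E))
\]
up to shift, where $(-)^\vee$ denotes an appropriate Matlis dual. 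From this, the finiteness of $\fd_Q N$ is detected by finiteness of $\id_Q\RHom_Q(N,E)$. Computing the exact values via $\sup$ and $\inf$ of these $\Tor$ and $\Ext$ modules gives
\[
\id_Q\RHom_R(R',M)-\pd_QR'=\fd_Q(R'\otimes_RM^\dagger)-\pd_QR'
\]
after accounting for the normalization of $D$ (which fixes the shift, since $\inf$ of $\RHom(k,D)$ is $0$). Taking infima over all quasi-deformations and using $\cidim=\cifd$ on $\Dfb$ yields (1).

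The main obstacle is this shift and faithfulness bookkeeping: checking that $E=\RHom_R(R',D)$ has the correct "depth zero" behaviour over $Q$, so that the Tor/Ext duality holds with no offset depending on the quasi-deformation. If a direct approach stalls, the fallback is to work with a minimal injective resolution of $D$ whose $i$th term is $\bigoplus_{\dim R/\p=i}E(R/\p)$, compute $\RHom_R(R',E(R/\p))$ fiberwise, and use the flatness of $R'$ to control Bass numbers prime by prime.

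For (2), apply (1) to $M^\dagger\in\Dfb$. This gives
\[
\CIHid_RM^\dagger=\cidim_RM^{\dagger\dagger}=\cidim_RM,
\]
which is the claim.
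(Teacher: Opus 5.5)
\textbf{There is a genuine gap.} Your plan of proving an equality for each quasi-deformation and then taking infima can be made to work. However, the step that carries all the content is wrong as written.

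First, the isomorphism $\RHom_{R'}(N,E)\simeq\RHom_Q(N,\RHom_{R'}(R',E))$ is false. Its right side is just $\RHom_Q(N,E)\simeq\RHom_{R'}(R'\lotimes_Q N,E)$, and $R'\lotimes_Q N$ carries extra Koszul (exterior-algebra) homology in degrees up to $c=\pd_QR'$. Already for $M=D$, so that $N=R'$, the two sides differ. The left side is $E$, with $\id_QE=c$, while $\RHom_Q(R',E)$ has $Q$-injective dimension $2c$.

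Second, the bound $\id_Q\RHom_Q(N,E)\le\fd_QN+\id_QE$ involves $\id_QE$, which equals $\pd_QR'$, not $0$. So any estimate of this shape is off by exactly the term $\pd_QR'$ that you then subtract.

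Third, the reverse direction rests on a Tor/Ext duality ``up to shift'' over $Q$ against $E$, which is not injective over $Q$. No duality without offset is available there, and this is precisely the bookkeeping you flag as the main obstacle. So neither inequality in (1) is established.

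\textbf{The missing idea.} Keep the $Q$-action coming from $N=R'\lotimes_RM^\dagger$ and transport everything back to $R$. There $\id_RD=0$, and $D$ behaves like $E_R(\kk)$ on $\m$-torsion objects. Adjunction gives, for every $Q$-module $T$,
\[
\RHom_Q(T,\RHom_R(R',M))\simeq\RHom_R(T\lotimes_QN,D).
\]
Then $\hsup(T\lotimes_QN)\le\pd_QN$ yields $\id_Q\RHom_R(R',M)\le\pd_QN$.

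For the reverse inequality, take $T=\ell$, the residue field of $Q$. The complex $W=\ell\lotimes_QN$ has $\m$-torsion homology, so for normalized $D$,
\[
\RHom_R(W,D)\simeq\RHom_R(W,\mathbf{R}\Gamma_\m D)\simeq\RHom_R(W,E_R(\kk)).
\]
Hence $\Ext^i_Q(\ell,\RHom_R(R',M))\cong\Hom_R(\Tor^Q_i(\ell,N),E_R(\kk))$. This is nonzero for $i=\pd_QN$, and for infinitely many $i$ if $\pd_QN=\infty$. So $\id_Q\RHom_R(R',M)=\pd_Q(R'\lotimes_RM^\dagger)$ for every quasi-deformation, which gives (1).

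\textbf{Comparison with the paper.} The paper uses the same phenomenon, that Grothendieck and Matlis duality agree on derived $\m$-torsion complexes, but not deformation by deformation. It replaces $M$ by $Z=\RHom_R(K,M)$, with $K$ the Koszul complex on generators of $\m$, and uses $K\lotimes_RM^\dagger\simeq Z^\dagger\simeq Z^\vee$ together with $\cidim_R(K\lotimes_RM^\dagger)=\cidim_RM^\dagger+n$. It then invokes Sather-Wagstaff--Totushek's Matlis duality theorem for finite-length homology, plus their bounds on $\CIHid$ of $\RHom_R(K,-)$ and $\RHom_R(-,D)$.

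Your deduction of (2) from (1) via $M^{\dagger\dagger}\simeq M$ is fine.
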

\begin{proof}
Since (2) follows from (1) by Grothendieck duality, we only need to prove (1). Let $E$ be the injective hull of $\kk$. We introduce the following notation:
\[
(-)^\vee\colonequals\RHom_R(-,E).
\]
Let $\x=x_1,\ldots,x_n$ be a generating set for $\m$, and let $K$ be the Koszul complex on $\x$. Set
\[
Z\colonequals\RHom_R(K,M).
\]
By \cite[Proposition 11.4.6, Theorem 13.3.29 and Proposition 14.3.2]{LarsBook} the complex $Z$ is derived $\m$-torsion. By \cite[Proposition 18.2.38]{LarsBook} $Z^\dagger\simeq Z^\vee$. By \cite[Theorem 12.3.25(a)]{LarsBook} it follows that
\begin{equation}\label{eq:iso}
K\lotimes_R M^\dagger\simeq\RHom_R(\RHom_R(K,M),D)= Z^\dagger\simeq Z^\vee.
\end{equation}
Next we prove that
\begin{equation}\label{eq:ci}
\cidim_R K\lotimes_R M^\dagger=\cidim_R M^\dagger +n.
\end{equation}
Let $R\rightarrow R^\prime\leftarrow Q$ be a quasi deformation. If we show that 
\begin{equation}\label{eq:ci2}
\pd_Q R^\prime\lotimes_R(K\lotimes_R M^\dagger)=\pd_Q R^\prime\lotimes_R M^\dagger+n,
\end{equation}
then by subtracting $\pd_QR^\prime$ and taking the infimum over all quasi-deformations, we would obtain \eqref{eq:ci}. We denote by $\bar{\x}$ the sequence of images of $x_1,\ldots,x_n$ in $R^\prime$ and by $\y=y_1,\ldots,y_n$ lifts of these images to $Q$. Let $K^{R^\prime}(\bar{\x})$ and $K^Q(\y)$ be the Koszul complexes on $\bar{\x}$ and $\y$ respectively. Then,
\[
R^\prime\lotimes_QK^Q(\y)\simeq K^{R^\prime}(\bar{\x})\simeq R^\prime\lotimes_R K,
\]
therefore
\begin{align*}
R^\prime\lotimes_R(K\lotimes_R M^\dagger)&\simeq(R^\prime\lotimes_R K)\lotimes_{R^\prime}(R^\prime\lotimes_R M^\dagger)\\
&\simeq(R^\prime\lotimes_Q K^Q(\y))\lotimes_{R^\prime}(R^\prime\lotimes_R M^\dagger)\\
&\simeq K^Q(\y)\lotimes_Q(R^\prime\lotimes_R M^\dagger)
\end{align*}
By \cite[Proposition 12.1.20(c)]{LarsBook} $R^\prime\lotimes_RM^\dagger\in\Dfb$, therefore \cite[Proposition 16.4.17]{LarsBook} justifies the second equality below
\begin{align*}
\pd_Q R^\prime\lotimes_R(K\lotimes_R M^\dagger)&=\pd_Q K^Q(\y)\lotimes_Q(R^\prime\lotimes_RM^\dagger)\\
&=\pd_QK^Q(\y)+\pd_Q R^\prime\lotimes_RM^\dagger\\
&=\pd_Q R^\prime\lotimes_RM^\dagger+n,
\end{align*}
this proves \eqref{eq:ci2} and therefore \eqref{eq:ci}. The string of (in)equalities below yields $\cidim_R M^\dagger\leq\CIHid M$
\begin{align*}
\cidim_R M^\dagger+n&=\cidim_R K\lotimes_R M^\dagger\\
&=\cidim_R Z^\vee\\
&=\CIHid_R Z\\
&\leq\CIHid_R M+n,
\end{align*}
where the first equality follows from \eqref{eq:ci}, the second from \eqref{eq:iso}, the third from \cite[Theorem 4.5(b)]{cihid} which can be applied since $Z$ has finite length homology, and the inequality follows from \cite[Proposition 4.3]{cihid}.

For the reverse inequality we note
\begin{align*}
\CIHid_R M&=\CIHid_R\RHom_R(M^\dagger, D)\\
&\leq\cidim_R M^\dagger+\id_RD\\
&=\cidim_R M^\dagger, 
\end{align*}
where the first equality follows from Grothendieck duality, the inequality from \cite[Proposition 4.1]{cihid}, and the last equality from \cite[Theorem 18.2.24]{LarsBook}.
\end{proof}

\begin{cor}\label{cor:ciinj}
Let $R$ be a local ring with a dualizing complex, and let $M\in\Dfb$. Then,
\[
\ciuid_RM=\CIHid_RM=\ciid_RM.
\]
\end{cor}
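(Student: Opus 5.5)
We will prove the chain $\ciid_RM\le\ciuid_RM\le\CIHid_RM\le\ciid_RM$. The first inequality is formal: an exceptional quasi-deformation is in particular a quasi-deformation, so the infimum defining $\ciid_RM$ runs over a larger set than the one defining $\ciuid_RM$. Without loss of generality we take $D$ to be a normalized dualizing complex, since normalizing only shifts $D$ and does not affect $(-)^\dagger$ up to a shift. By \Cref{thm:duality}(1) we then have $\CIHid_RM=\cidim_RM^\dagger$, so the remaining two inequalities become
\[
\ciuid_RM\le\cidim_RM^\dagger\le\ciid_RM .
\]

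For $\ciuid_RM\le\cidim_RM^\dagger$ we may assume $\cidim_RM^\dagger<\infty$. By \cite[Corollary 4.6]{ciInj} (the results already used in the proof of \Cref{thm:globalinj}), finiteness of $\cidim_RM^\dagger$ gives finiteness of $\ciuid_R M^{\dagger\dagger}=\ciuid_RM$. To get the inequality of values, and not merely finiteness, we will use the Bass and Auslander--Buchsbaum type formulas. When finite, $\ciuid_RM=\depth R-\inf$-type expression in $M$ by \cite[Proposition 2.11(b)]{ciInj}. Likewise $\cidim_RM^\dagger=\depth R-\depth M^\dagger$. Under Grothendieck duality with $D$ normalized, $\depth M^\dagger$ is computed from $\hsup$ / the Bass-series data of $M$, so both sides reduce to the same number. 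Concretely, for finite $\CIHid$ there is also a Bass-type formula from \cite{cihid}, so once finiteness of all three is established, equality follows by comparing formulas.

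The main step is $\cidim_RM^\dagger\le\ciid_RM$. Equivalently, we must show that finiteness of $\ciid_RM$ forces finiteness of $\CIHid_RM$ (or of $\cidim M^\dagger$); equality of values will again follow from the Bass formulas, since $\ciid$ also satisfies one \cite{ciInj}. Take a quasi-deformation $R\to R'\leftarrow Q$ with $\id_Q(R'\otimes_RM)<\infty$. We pass to completions: replace $R'$ and $Q$ by $\wh{R'}$ and $\wh Q$. This preserves flatness, the regular sequence, and finiteness of $\id$ for complexes in $\Dfb$. Since $\wh{R'}$ is complete, it has a dualizing complex, so the new quasi-deformation is exceptional except possibly for the Gorensteinness of the closed fibre. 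The fibre condition is the obstacle.

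To handle the fibre, we first try to use $M\in\Dfb$ directly. Over $Q$, finiteness of $\id_Q(R'\otimes_RM)$ for a finitely generated complex means $R'\otimes_R M$ lies in $\thick D_Q$. Dualizing over $R'$, with $D_{R'}$ obtained from $D_Q$ as $\Sigma^c D_Q\otimes_Q R'$, shows $\pd_Q\RHom_{R'}(R'\otimes_RM,D_{R'})<\infty$. Now $D_{R'}\simeq R'\otimes_R D\otimes_{R'} D_F$, where $D_F$ is a dualizing complex of the closed fibre. Hence $\RHom_{R'}(R'\otimes M,D_{R'})\simeq (R'\otimes_R M^\dagger)\otimes_{R'}(\text{fibre dualizing twist})$. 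The remaining task is to strip off this twist. We would try to do so by either
\begin{itemize}
\item composing with a further flat extension, namely a Gorenstein cover of the fibre; or
\item applying the argument of \Cref{thm:duality}, Koszul-reducing to finite length so that Matlis duality replaces $D_{R'}$.
\end{itemize}
The second route is more promising. For $Z=\RHom_R(K,M)$ with finite length homology we have $Z^\vee\simeq Z^\dagger$, and for finite-length objects $\ciid$ and $\CIHid$ agree via \cite[Theorem 4.5]{cihid}. Combined with \cite[Proposition 4.3]{cihid} and \eqref{eq:ci}, this transports finiteness back to $M$.
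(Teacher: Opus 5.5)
Your outer structure agrees with the paper's. The inequality $\ciid_RM\le\ciuid_RM$ is formal, and $\ciuid_RM=\cidim_RM^\dagger=\CIHid_RM$ comes from \cite[Corollary 4.6(b)]{ciInj}, the Bass and Auslander--Buchsbaum formulas, and \Cref{thm:duality}(1). The identity you need there is just $\depth_RM^\dagger=\inf M$ for normalized $D$ \cite[Theorem 18.2.31(b)]{LarsBook}; $\hsup$ plays no role.

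The gap is the step you yourself call the main one: that $\ciid_RM<\infty$ forces $\cidim_RM^\dagger<\infty$. You list two strategies but carry out neither. The one you favor rests on the assertion that, for complexes with finite length homology, $\ciid$ and $\CIHid$ agree ``via \cite[Theorem 4.5]{cihid}''. As that theorem is used in this paper, it compares $\CIHid_RZ$ with $\cifd_RZ^\vee$ (equivalently $\cidim_RZ^\vee$). It gives no information about $\ciid_RZ$, and a finite-length comparison of $\ciid$ with $\CIHid$ is essentially the statement you are trying to prove. Passing from $\ciid_RM<\infty$ to $\ciid_RZ<\infty$ is harmless, since tensoring with a Koszul complex over $Q$ preserves finite injective dimension; the trouble is the next step.

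Nor does the reduction to finite length make the closed-fibre obstruction disappear. Suppose $R\to R'\leftarrow Q$ is a quasi-deformation with $\id_Q(R'\lotimes_RZ)<\infty$ and $Z$ has finite length homology. Matlis duality over $Q$, the isomorphism $\Hom_Q(R',E_Q)\cong E_{R'}$, and adjunction give $\fd_Q\RHom_R(Z,E_{R'})<\infty$. What you need, however, is control of $R'\lotimes_RZ^\vee\simeq\RHom_R(Z,R'\otimes_RE)$, where $E=E_R(\kk)$. Already for $Z=\kk$, with $F=R'/\m R'$, the first object is $\Hom_{R'}(F,E_{R'})\cong E_F$ (the injective hull of the residue field of $F$), and the second is $F$. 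These agree only when $F$ is artinian Gorenstein, and nothing in your sketch converts finiteness for one into finiteness for the other. So the fibre twist reappears verbatim at finite length, and the argument stalls exactly where the difficulty of \cite[Question 2.9]{ciInj} lies.

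The paper does not remove the twist by hand. It notes $\ciid_RM^{\dagger\dagger}<\infty$ and invokes \cite[Proposition 3.3(3)]{MMRN} to obtain $\cidim_RM^\dagger<\infty$. It then gets $\ciuid_RM<\infty$ from \cite[Corollary 4.6(b)]{ciInj} and equality from the Bass formula. You need that result, or a genuine substitute for it, to complete the proof.
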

\begin{proof}
Let $D$ be a normalized dualizing complex. By \cite[Corollary 4.6(b)]{ciInj} $\ciuid_RM<\infty$ if and only if $\cidim_RM^\dagger<\infty$. If both quantities are finite then
\[
\ciuid_RM=\depth R-\inf M
\]
by Bass' Formula \cite[Proposition 2.11]{ciInj}, and
\[
\cidim_RM^\dagger=\depth R-\depth M^\dagger=\depth R-\inf M,
\]
where the first equality is the Auslander-Buchsbaum formula for complete intersection dimension, and the second equality follows from \cite[Theorem 18.2.31(b)]{LarsBook}. Therefore
\[
\ciuid_RM=\cidim_RM^\dagger.
\]
Invoking \Cref{thm:duality}(1) yields
\[
\ciuid_RM=\CIHid_RM.
\]

Next we show that $\ciid_RM=\ciuid_RM$. The inequality $\ciid_RM\leq\ciuid_RM$ is obvious, and when the right side is finite one gets equality since both dimensions satisfy a Bass formula, see \cite[Proposition 2.11]{ciInj}.

For the reverse inequality we notice that if $\ciid_RM<\infty$, then $\ciid_RM^{\dagger\dagger}<\infty$. By \cite[Proposition 3.3(3)]{MMRN} $\cidim_RM^\dagger<\infty$, and by \cite[Corollary 4.6(b)]{ciInj} $\ciuid_RM<\infty$, and since both dimensions satisfy the Bass formula one has equality.
\end{proof}

Next we explore the relation between the three injective versions of complete intersection dimension for rings that do not necessarily admit a dualizing complex. We first study the behavior of these dimensions under completion. For the upper complete intersection injective dimension the answer already follows from \cite[Proposition 2.11(b) and Corollary 3.7(a)]{ciInj}: if $R$ is a local ring and $M\in\Dfb$, then
\[
\ciuid_RM=\ciuid_{\wh{R}}\wh{M}.
\]
Part (1) of the next result shows that \cite[Corollary 3.7(b)]{ciInj} holds for complexes in $\Dfb$ that do not necessarily have finite length homology.

\begin{theorem}\label{thm:completion}
If $R$ is a local ring and $M\in\Dfb$, then
\begin{enumerate}
\item $\ciid_{\wh R}\wh M=\ciid_RM$.
\item $\CIHid_{\wh R}\wh M\leq\CIHid_RM$, with equality if the right hand side is finite.
\end{enumerate}
\end{theorem}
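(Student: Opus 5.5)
The plan is to exploit the fact that $\wh R$ admits a dualizing complex, so \Cref{cor:ciinj} applies over $\wh R$ and gives
\[
\ciid_{\wh R}\wh M=\ciuid_{\wh R}\wh M=\CIHid_{\wh R}\wh M .
\]
Combining this with the already-known equality $\ciuid_RM=\ciuid_{\wh R}\wh M$ recalled just before the statement, part (1) reduces to proving $\ciid_RM=\ciuid_RM$. Part (2) reduces to comparing $\CIHid_RM$ with $\ciuid_RM$.

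For (1), the inequality $\ciid_RM\le\ciuid_RM$ is immediate, because exceptional quasi-deformations are in particular quasi-deformations. For the reverse inequality, we may assume $\ciid_RM<\infty$, witnessed by some quasi-deformation $R\to R'\leftarrow Q$. We then complete it: $R\to\wh{R'}\leftarrow\wh Q$ is again a quasi-deformation, and the composite $\wh R\to \wh{R'}$ is flat, so this gives a quasi-deformation of $\wh R$. Injective dimension of $\wh{R'}\otimes_R M\simeq \wh Q\otimes_Q(R'\otimes_RM)$ over $\wh Q$ should be at most $\id_Q(R'\otimes_RM)$, since the homology is finitely generated and injective dimension can be computed by Bass numbers, which are unchanged by completion. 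Hence $\ciid_{\wh R}\wh M<\infty$. By \Cref{cor:ciinj} it follows that $\ciuid_{\wh R}\wh M<\infty$, hence $\ciuid_RM<\infty$. Both $\ciid$ and $\ciuid$ then satisfy the Bass formula $\depth R-\inf M$ (\cite[Proposition 2.11]{ciInj}), and $\depth$ and $\inf$ do not change under completion. This yields $\ciid_RM=\ciuid_RM=\ciid_{\wh R}\wh M$. The step I expect to be most delicate is the claim that completing the quasi-deformation preserves finiteness of $\id_Q$ for a complex in $\Dfb$; I would justify it via the Bass-number characterization of injective dimension for complexes with finitely generated homology over local rings.

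For (2), suppose $\CIHid_RM<\infty$, witnessed by $R\to R'\leftarrow Q$. I would pass to completions as before. The key identification is $\RHom_{\wh R}(\wh{R'},\wh M)$ versus $\RHom_R(R',M)$: $\wh{R'}$ is flat but not finite over $R$, so this is not automatic. I would try to use $\wh M\simeq \wh R\lotimes_RM$ together with derived completion, or alternatively reduce to the finite-length situation of \cite[Corollary 3.7(b)]{ciInj} by using $Z=\RHom_R(K,M)$ as in the proof of \Cref{thm:duality}. The inequalities $\CIHid_R Z\le \CIHid_RM+n$ and $\CIHid_{\wh R}\wh M+n\ge\CIHid_{\wh R}\wh Z$, with equality over $\wh R$ by \Cref{thm:duality} and \eqref{eq:ci}, then allow one to pass the dimension between $R$ and $\wh R$ on finite-length complexes, where $Z\simeq\wh Z$.

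Concretely, the argument for (2) would run as follows. Finiteness of $\CIHid_RM$ gives finiteness of $\CIHid_R Z$, and hence finiteness of $\CIHid_{\wh R}\wh Z$. Over $\wh R$, \Cref{thm:duality} turns $\CIHid_{\wh R}\wh Z$ into $\cidim_{\wh R}$ of a Koszul twist of $\wh M^\dagger$, which is finite; dualizing back then gives that $\CIHid_{\wh R}\wh M$ is finite. Once both sides are finite, equality would follow from a Bass-type formula for $\CIHid$ (\cite{cihid}) together with the invariance of depth and $\inf$ under completion. The inequality in general holds trivially when the right-hand side is infinite.
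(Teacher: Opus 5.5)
Your part (1) is correct and is essentially the paper's argument: you complete a witnessing quasi-deformation and note that $\id$ is unchanged under $\wh Q\lotimes_Q-$ on complexes with finitely generated homology. You then pass to $\ciuid$ over $\wh R$ via \Cref{cor:ciinj}, descend with \cite[Corollary~3.7(a)]{ciInj}, and finish with the Bass formula.

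Part (2), however, has a genuine gap, located at its only nontrivial step: the implication $\CIHid_RZ<\infty\Rightarrow\CIHid_{\wh R}\wh Z<\infty$. The outer steps are fine. One has $\CIHid_RZ\le\CIHid_RM+n$ by \cite[Proposition~4.3]{cihid}, and over $\wh R$, \Cref{thm:duality} together with \eqref{eq:iso} and \eqref{eq:ci} gives $\CIHid_{\wh R}\wh Z=\CIHid_{\wh R}\wh M+n$. But nothing you cite supplies the middle implication. \cite[Corollary~3.7(b)]{ciInj} is a statement about $\ciid$, not $\CIHid$. Nor does finite length of $\H(Z)$ by itself make $\CIHid$ compatible with completion. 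Completing a quasi-deformation $R\to R'\leftarrow Q$ that witnesses $\CIHid_RZ<\infty$ produces $\wh{R'}$, the $\m'$-adic completion of $R'$. This is neither $\wh R\otimes_RR'$ nor the $\m R'$-adic completion, since the closed fibre $R'/\m R'$ need not be Artinian. So you have no way to compare $\RHom_{\wh R}(\wh{R'},Z)$ with $\RHom_R(R',Z)$. This is exactly the obstruction you flagged for $M$, and replacing $M$ by $Z$ does not remove it.

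The missing idea is Matlis duality, which trades $\CIHid$ for CI-flat dimension; the latter does ascend along $R\to\wh R$. Set $M^\vee=\RHom_R(M,E)$.
\begin{enumerate}
\item By \cite[Theorem~4.5(a)]{cihid}, $\cifd_RM^\vee<\infty$.
\item By \cite[Proposition~4.2]{cifd}, $\cifd_{\wh R}(\wh R\lotimes_RM^\vee)<\infty$.
\item Since $M^\vee$ is derived $\m$-torsion and $E$ is also the injective hull of $\kk$ over $\wh R$, one has $\wh R\lotimes_RM^\vee\simeq M^\vee\simeq\RHom_{\wh R}(\wh M,E)$.
\item Since $\wh M$ is derived Matlis reflexive, \cite[Theorem~4.5(b)]{cihid} yields $\CIHid_{\wh R}\wh M=\cifd_{\wh R}\RHom_{\wh R}(\wh M,E)<\infty$.
\end{enumerate}
This is the paper's proof of (2), and it works for $M$ directly, so the Koszul complex and \Cref{thm:duality} are unnecessary. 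Applied to $Z$ it would close your gap, but then the detour through $Z$ buys nothing. Your final step, equality via the Bass formula \cite[Corollary~3.4]{cihid} and invariance of depth and $\inf$ under completion, is fine once finiteness of both sides is established.
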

\begin{proof}
\begin{enumerate}
\item We first assume $\ciid_RM<\infty$. Let $R\rightarrow R'\leftarrow Q$ be a quasi-deformation such that $\id_Q R'\lotimes_RM<\infty$. We note that $R'\lotimes_RM\in\Dfb[R']$ and therefore it belongs to $\Dfb[Q]$ since $R'$ is finitely generated as a $Q$-module. By \cite[Proposition 18.3.10]{LarsBook}
\[
\id_{\wh Q}\wh{Q}\lotimes_QR'\lotimes_RM=\id_QR'\lotimes_RM<\infty.
\]
The completion of the quasi-deformation considered above gives a quasi-deformation of $\wh R$
\[
\wh R\rightarrow\wh{R'}\leftarrow\wh{Q},
\]
indeed, the left map is flat by \cite[Theorem 22.4(i)]{matsu}, and the right map is a deformation by \cite[Corollary 1.1.3(b)]{BrunsHerzog}. We note that
\begin{align*}
\wh{R'}\lotimes_{\wh R}\wh M&\simeq\wh{R'}\lotimes_{\wh R}\wh R\lotimes_RM\\
&\simeq\wh{R'}\lotimes_RM\\
&\simeq\wh{Q}\lotimes_Q\left(R'\lotimes_RM\right).
\end{align*}
This shows that $\id_{\wh Q}\wh{R'}\lotimes_{\wh R}\wh M<\infty$, and therefore $\ciid_{\wh R}\wh M<\infty$.

We now assume $\ciid_{\wh R}\wh M<\infty$. Since $\wh R$ has a dualizing complex, it follows from \Cref{cor:ciinj} that $\ciuid_{\wh R}\wh M<\infty$, and therefore by \cite[Corollary 3.7(a)]{ciInj} $\ciuid_RM<\infty$, which trivially implies that $\ciid_RM<\infty$.

If both dimensions are finite, then
\begin{align*}
\ciid_RM&=\depth R-\inf M\\
&=\depth \wh R-\inf\wh M\\
&=\ciid_{\wh R}\wh M,
\end{align*}
where the first and third equality follow from \cite[Proposition 2.11(a)]{ciInj}.
\item We assume $\CIHid_RM<\infty$. Let $E_R(\kk)$ be the injective hull of $\kk$ over $R$, and set 
\[
M^\vee\colonequals\RHom_R(M,E_R(\kk)).
\]
By \cite[Theorem 4.5(a)]{cihid} $\cifd_RM^\vee<\infty$. By \cite[Proposition 4.2]{cifd}
\[
\cifd_{\wh R}\wh{R}\lotimes_RM^\vee<\infty.
\]
Since $M^\vee$ is derived $\m$-torsion, it follows from \cite[Theorem 13.4.16(c)]{LarsBook} that $\wh{R}\lotimes_RM^\vee\simeq M^\vee$. By \cite[Exercise 3.2.14]{BrunsHerzog} $E$ is also the injective hull of $\kk$ over $\wh R$, denoted by $E_{\wh R}(\kk)$. Therefore, adjunction yields
\[
\RHom_{\wh R}(\wh M,E_{\wh R}(\kk))\simeq M^\vee.
\]
By \cite[Proposition 18.3.2]{LarsBook} $\wh M\in\Dfb[\wh R]$, and therefore by \cite[Theorem 18.1.9]{LarsBook} $\wh M$ is a derived Matlis reflexive object of $\D(\wh R)$. Therefore,
\begin{align*}
\CIHid_{\wh R}\wh M&=\cifd_{\wh R}\RHom_{\wh R}(\wh M, E_{\wh R}(\kk))\\
&=\cifd_{\wh R} M^\vee\\
&=\cifd_{\wh R}\wh R\lotimes_RM^\vee<\infty,
\end{align*}
where the first equality follows from \cite[Theorem 4.5(b)]{cihid}, which can be applied by \cite[Lemma 16.1.40]{LarsBook}.

In particular
\begin{align*}
\CIHid_{\wh R}\wh M&=\depth \wh R-\inf\wh M\\
&=\depth R-\inf M\\
&=\CIHid_RM,
\end{align*}
where the first and third equality follow from \cite[Corollary 3.4]{cihid}.\qedhere
\end{enumerate}
\end{proof}
\begin{question}
Let $R$ be a local ring and $M\in\Dfb$. If $\CIHid_{\wh R}\wh M$ is finite, is $\CIHid_RM$ finite as well?
\end{question}
Since $\wh R$ has a dualizing complex, an immediate consequence of \Cref{cor:ciinj}, \cite[Proposition 2.11 and Corollary 3.7
(a)]{ciInj}, and \Cref{thm:completion} is the following
\begin{cor}\label{cor:ciinj=}
Let $R$ be a local ring and $M\in\Dfb$. Then
\[
\ciuid_RM=\ciid_RM.
\]
Moreover, if $\CIHid_RM<\infty$, then
\[
\ciuid_RM=\ciid_RM=\CIHid_RM.
\]
\end{cor}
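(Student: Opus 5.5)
The plan is to pass to the completion $\wh R$, which admits a dualizing complex, and to transport the equalities of \Cref{cor:ciinj} back to $R$.

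\emph{First claim.} By \cite[Proposition 2.11(b) and Corollary 3.7(a)]{ciInj}, as recalled before \Cref{thm:completion}, we have $\ciuid_RM=\ciuid_{\wh R}\wh M$. By \Cref{thm:completion}(1) we have $\ciid_RM=\ciid_{\wh R}\wh M$. Since $\wh M\in\Dfb[\wh R]$ and $\wh R$ has a dualizing complex, \Cref{cor:ciinj} gives $\ciuid_{\wh R}\wh M=\ciid_{\wh R}\wh M$. Chaining these three equalities yields $\ciuid_RM=\ciid_RM$.

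\emph{Second claim.} Assume $\CIHid_RM<\infty$. By \Cref{thm:completion}(2), $\CIHid_{\wh R}\wh M=\CIHid_RM$; here the finiteness hypothesis is essential, since equality in \Cref{thm:completion}(2) is only known when the right-hand side is finite. Applying \Cref{cor:ciinj} over $\wh R$ gives $\CIHid_{\wh R}\wh M=\ciuid_{\wh R}\wh M$. This equals $\ciuid_RM$ by the completion invariance already used, and $\ciuid_RM=\ciid_RM$ by the first part. Hence all three dimensions coincide.

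Alternatively, the second claim can be finished without the first part. Finiteness of $\CIHid_RM$ forces $\ciuid_RM<\infty$ by the chain above. All three invariants then satisfy Bass-type formulas equal to $\depth R-\inf M$, by \cite[Proposition 2.11]{ciInj} and \cite[Corollary 3.4]{cihid}.

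There is no real obstacle, since every step is an earlier result. The only point needing care is the direction of finiteness in the completion comparison for $\CIHid$. This is exactly why the statement assumes $\CIHid_RM<\infty$ rather than asserting equality unconditionally.
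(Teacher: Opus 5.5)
Your proof is correct and follows the paper's own argument: pass to $\wh R$, which has a dualizing complex, use the completion invariance of $\ciuid$ from \cite[Proposition 2.11 and Corollary 3.7(a)]{ciInj} together with \Cref{thm:completion}, and then apply \Cref{cor:ciinj} over $\wh R$. You also use the hypothesis $\CIHid_RM<\infty$ exactly where it is needed, namely to obtain equality in \Cref{thm:completion}(2).
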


This answers \cite[Question 2.9]{ciInj} for complexes in $\Dfb$. We give an example showing that if $M$ is bounded but not with finitely generated homology, then it is possible for $\ciuid_RM=\infty$ and $\ciid_RM<\infty$, providing a full answer to Sather-Wagstaff's question. We start with the following 
\begin{lemma}\label{lem:ex}
Let $R$ be a Noetherian local domain with fraction field $K$. If $\wh{R}\otimes_RK$ is not Gorenstein, then
\[
\ciid_RK=0,\quad\text{and}\quad\ciuid_RK=\infty.
\]
\end{lemma}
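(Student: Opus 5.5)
For $\ciid_RK=0$ we use the trivial quasi-deformation $R\xrightarrow{=}R\xleftarrow{=}R$, with the empty regular sequence, so that $\pd_RR=0$. Then $R'\otimes_RK=K$, and $\ciid_RK\le \id_RK$. Since $R$ is a domain, $K$ is a torsion-free divisible $R$-module, hence injective, so $\id_RK=0$. To see that the infimum is exactly $0$ and not negative, we note that for any quasi-deformation $R\to R'\leftarrow Q$ we have $R'\otimes_R K\neq 0$ by faithful flatness. The standard inequality $\id_Q(R'\otimes_RK)\ge \pd_QR'$ comes from the fact that $R'\otimes_RK$ is an $R'$-module and $\pd_QR'$ equals the length $c$ of the regular sequence. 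Indeed, for an $R'$-module $N$ one has $\Ext^c_Q(R',N)\cong N\neq 0$, which gives $\id_QN\ge c$. Hence $\ciid_RK\ge 0$, and so $\ciid_RK=0$.

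For $\ciuid_RK=\infty$ we argue by contradiction. Suppose there is an exceptional quasi-deformation $R\to R'\leftarrow Q$ with $\id_Q(R'\otimes_RK)<\infty$. Following \cite{ciInj}, the first step is to pass to completions, which by \cite[Corollary 3.7(a)]{ciInj}, or by repeating the argument of \Cref{thm:completion}(1), should give finite injective dimension of $\wh{R'}\otimes_{R}K$ over $\wh Q$. The point of exceptionality, namely that $R'$ has Gorenstein formal fibers and $R'/\m R'$ is Gorenstein, is that $R\to\wh{R'}$ is a flat local map with Gorenstein closed fiber. The plan is then to localize at a prime $P$ of $\wh{R'}$ lying over $(0)\subset R$ and minimal over $\wh{R'}\otimes_R K$. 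At such a $P$ the module $(\wh{R'}\otimes_RK)_P=\wh{R'}_P\otimes_R K$ is the localization of the generic fiber ring $\wh{R'}\otimes_RK$. It would have finite injective dimension over the complete intersection-type ring $\wh Q_{P'}$, where $P'$ is the contraction of $P$ to $\wh Q$, and it is a ring that is a quotient of $\wh Q_{P'}$ by a regular sequence. Finite injective dimension of a cyclic ring module $S=\wh Q_{P'}/(\y)$ over $\wh Q_{P'}$, in fact of $S$ itself localized, forces $S$ to be Gorenstein, since $\id_S S<\infty$ follows by the change-of-rings for a regular sequence. Consequently all local rings of $\wh{R'}\otimes_RK$ are Gorenstein.

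Finally we descend Gorensteinness along the flat map $\wh R\otimes_RK\to \wh{R'}\otimes_RK$, which is the generic fiber of $R\to\wh R$ mapped flatly and faithfully to that of $R\to\wh{R'}$. Since the map $\wh R\to\wh{R'}$ is flat local, it is faithfully flat, so Gorensteinness descends under flat local maps \cite[Theorem 23.4]{matsu}. This gives that $\wh R\otimes_RK$ is Gorenstein, contradicting the hypothesis.

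The main obstacle is the middle step. We need to convert $\id_Q(R'\otimes_RK)<\infty$, where the module is not finitely generated, into Gorensteinness of the local rings of the generic fiber, and to make sure the completion step is legitimate for a non-finitely generated module. If passing to $\wh Q$ is problematic, we will try first to work directly with $R'$. Its formal fibers are Gorenstein, so the Gorenstein property of $R'\otimes_RK$ transfers to $\wh{R'}\otimes_RK$ via the flat map $R'\otimes_RK\to\wh{R'}\otimes_RK$, whose fibers are formal fibers of $R'$. Gorensteinness of $R'\otimes_RK$ itself follows from finite injective dimension of its local rings, which are localizations of $R'\otimes_R K$, over the corresponding localizations of $Q$.
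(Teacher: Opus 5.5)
Your argument for $\ciid_RK=0$ is correct. It is also more explicit than the paper, which leaves the lower bound implicit; your computation $\Ext^c_Q(R',N)\cong N$ for an $R'$-module $N$ handles it.

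For $\ciuid_RK=\infty$, the completion step of your main route is not supported by what you cite. \cite[Corollary 3.7(a)]{ciInj} is about completing the base ring $R$; it does not carry $\id_Q(R'\otimes_RK)<\infty$ over to $\wh Q$. The argument of \Cref{thm:completion}(1) rests on \cite[Proposition 18.3.10]{LarsBook}, which needs homology finitely generated over $Q$, and $R'\otimes_RK$ is not. The paper avoids this by quoting \cite[Proposition 3.5]{ciInj}, which supplies a quasi-deformation with $Q$ (hence $R'$) complete and $\id_Q(R'\otimes_RK)<\infty$. It then localizes at primes of $R'$ over $(0)$ to show $R'\otimes_RK$ is Gorenstein, and descends along the faithfully flat map $\wh R\otimes_RK\to R'\otimes_RK$.

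Your fallback in the last paragraph is a valid alternative and should become the proof. Start from an arbitrary exceptional quasi-deformation; the same localization argument shows $R'\otimes_RK$ is Gorenstein. The map $R'\otimes_RK\to\wh{R'}\otimes_RK$ is flat. For a prime $P$ of $\wh{R'}$ over $(0)\subset R$ with $\p'=P\cap R'$, the closed fiber of $R'_{\p'}\to\wh{R'}_P$ is a localization of the formal fiber of $R'$ at $\p'$, hence Gorenstein. So $\wh{R'}\otimes_RK$ is Gorenstein by \cite[Theorem 23.4]{matsu}, and you then descend along $\wh R\otimes_RK\to\wh{R'}\otimes_RK$. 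This route never transports the injective dimension of a non-finitely generated module along completion, which is the delicate content inside \cite[Proposition 3.5]{ciInj}. It also shows that only the Gorenstein formal fibers of $R'$ are used, not the Gorenstein closed fiber. The paper's version is shorter, given the citation.

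One justification needs repair. There is no general change-of-rings principle that takes finite injective dimension over $Q$ to finite injective dimension over $Q/(\y)$: the residue field has finite injective dimension over $\kk[[s,t]]$ but not over $\kk[[s,t]]/(s^2)$. For the module $S=Q_\q/(\y)$ itself the conclusion does hold, for example as in the paper. $S$ is a finitely generated $Q_\q$-module of finite projective dimension (via the Koszul complex) and finite injective dimension, so $Q_\q$ is Gorenstein, and hence so is $S$.
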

\begin{proof}
By Baer's criterion the fraction field of a domain is always injective, therefore $K$ is an injective $R$-module yielding $\ciid_RK=0$.

Suppose that $\ciuid_RK<\infty$, then \cite[Proposition 3.5]{ciInj} shows that there exists a quasi-deformation $R\rightarrow R'\xleftarrow{\tau}Q$ such that $Q$ is complete and $\id_Q R'\otimes_RK<\infty$. By faithful flatness of $R\rightarrow R'$ and \cite[Theorem 7.3(i)]{matsu}, there is $\p'\in\Spec R'$ such that $\p'\cap R=(0)$. Let $\q\colonequals\tau^{-1}(\p')$. We note that $(R'\otimes_RK)_{\p'}\cong R'_{\p'}$. Therefore, since $\id_QR'\otimes_RK<\infty$, it follows that $\id_{Q_\q}R'_{\p'}<\infty$. Since $R'_{\p'}$ is a deformation of $Q_\q$, it follows that $\pd_{Q_\q}R'_{\p'}<\infty$, and since $R'_{\p'}$ is a finitely generated $Q_\q$-module, we deduce that $Q_\q$ is Gorenstein. Since $R'_{\p'}$ is a quotient of $Q_\q$ by a regular sequence, it follows that it is also Gorenstein. A prime ideal of $R'\otimes_RK$ corresponds to a prime ideal of $R'$ that when contracted to $R$ gives $(0)$. The localization of $R'\otimes_RK$ at this prime ideal is isomorphic to the localization of $R'$ at the contraction. Since we proved that these localizations are Gorenstein, it follows that $R'\otimes_RK$ is Gorenstein. Consider the map $\wh{R}\rightarrow\wh{R'}=R'$, which is faithfully flat by \cite[Theorem 22.4(i)]{matsu}. Tensoring this map with $K$ yields a faithfully flat map $\wh{R}\otimes_RK\rightarrow R'\otimes_RK$ by \cite[Corollary 5.4.24]{LarsBook}. By \cite[Theorem 17.4.15]{LarsBook} it follows that $\wh{R}\otimes_RK$ is Gorenstein, contradicting the hypothesis.
\end{proof}
\begin{rem}
In \cite[Proposition 3.1 and Remark 3.2(i)]{FerRay}, Ferrand and Raynaud show that a Noetherian local domain $R$ with fraction field $K$ such that $\wh{R}\otimes_RK$ is not Gorenstein does indeed exist. This, together with \Cref{cor:ciinj=} and \Cref{lem:ex}, provides a full answer to Sather-Wagstaff's question \cite[Question 2.9]{ciInj}.
\end{rem}

As another application of \Cref{lem:ex} we prove

\begin{cor}
Let $R$ be a Noetherian local domain of Krull dimension 1 and with fraction field $K$. Then, $\ciuid_RK<\infty$ if and only if $R$ admits a canonical ideal.
\end{cor}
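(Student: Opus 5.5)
The plan is to route both directions through \Cref{lem:ex} together with the classical characterization of canonical ideals in dimension one. Recall that a one-dimensional Noetherian local domain $R$ admits a canonical ideal if and only if $\wh R\otimes_RK$ is Gorenstein. This is the result of Herzog–Kunz: $R$ has a canonical module iff the generic formal fibre is Gorenstein. In dimension one, a canonical module is automatically isomorphic to an ideal, being a rank-one torsion-free module. So it suffices to prove the following equivalence:
\[
\ciuid_RK<\infty\iff \wh R\otimes_RK \text{ is Gorenstein}.
\]

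For the forward implication we argue by contraposition. If $\wh R\otimes_RK$ is not Gorenstein, then \Cref{lem:ex} gives $\ciuid_RK=\infty$ directly.

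For the reverse implication, assume $\wh R\otimes_RK$ is Gorenstein. We use the trivial quasi-deformation $R\to\wh R\leftarrow\wh R$, where $Q=\wh R$ with the empty regular sequence. It is exceptional: $\wh R$ has a dualizing complex, hence Gorenstein formal fibres, and its closed fibre is $\kk$, which is Gorenstein. It then suffices to show that $\id_{\wh R}(\wh R\otimes_RK)<\infty$.

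The key computation is the following. Since $\dim R=1$, we have $\wh R\otimes_RK\cong \wh R_{S}$, where $S$ is the image of $R\smallsetminus\{0\}$. Its primes are the primes $\q$ of $\wh R$ with $\q\cap R=0$, and these are exactly the minimal primes of $\wh R$ of dimension one (those not lying over $\m$). Hence $\wh R\otimes_RK$ is an Artinian ring, a finite product of the local rings $\wh R_\q$. Each factor is Gorenstein by hypothesis, so it is self-injective; thus $\wh R_\q$ is injective over itself. Localizations of injectives over the Noetherian ring $\wh R$ are injective, and an injective $\wh R_S$-module is injective over $\wh R$ because $\wh R\to\wh R_S$ is flat. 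Therefore $\wh R\otimes_RK$ is an injective $\wh R$-module, and $\ciuid_RK\le 0<\infty$.

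The main obstacle I expect is justifying the identification of canonical ideal existence with Gorensteinness of $\wh R\otimes_RK$ in the non-Cohen–Macaulay-free setting. Here, however, a one-dimensional domain is Cohen–Macaulay, so the Herzog–Kunz / Aoyama criterion applies. Concretely, a canonical module exists iff it descends from $\wh R$, iff $\omega_{\wh R}$ is locally free of rank one on the generic fibre. I would cite that criterion rather than reprove it, and check only that in dimension one the canonical module embeds as an ideal.
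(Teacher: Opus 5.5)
Your proof is correct. The forward direction matches the paper's: \Cref{lem:ex} plus the Herzog--Kunz criterion, which the paper cites as \cite[Proposition~2.7]{almostG}.

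One point there deserves to be made explicit. That criterion is normally stated as ``$Q(\wh R)$ is Gorenstein'', which is why the paper's forward direction consists of showing $S^{-1}\wh R\cong T^{-1}\wh R$. Your key computation supplies the same identification: $\wh R$ is Cohen--Macaulay with no embedded primes, so both rings are $\prod_{\q\in\operatorname{Min}\wh R}\wh R_\q$. You should say so when you invoke the criterion, rather than attributing to Herzog--Kunz a statement about the generic formal fibre.

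The genuine difference is in the converse. The paper uses the canonical module $D$ directly. Since $K$ is injective, a Foxby-equivalence argument gives finite flat, hence finite CI-flat, dimension for the appropriate $\RHom$ between $K$ and $D$. Then \cite[Corollary~4.6]{ciInj} converts this back into $\ciuid_RK<\infty$, so Herzog--Kunz is needed in only one direction.

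You instead use Herzog--Kunz in both directions. The extra implication, canonical module $\Rightarrow$ $\wh R\otimes_RK$ Gorenstein, is the easy one, since rings with a dualizing complex have Gorenstein formal fibres. You then exhibit an explicit witness: the exceptional quasi-deformation $R\to\wh R\xleftarrow{=}\wh R$, together with the fact that $\wh R\otimes_RK$ is an Artinian Gorenstein, hence self-injective, ring that is flat over $\wh R$. This avoids the Foxby-class machinery and gives the sharper bound $\ciuid_RK\le 0$.

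Your argument also proves more. Replace ``self-injective'' by ``finite self-injective dimension'', which holds by Bass's theorem since $\wh R\otimes_RK$ has finite Krull dimension. Then $\id_{\wh R}(\wh R\otimes_RK)<\infty$ whenever $\wh R\otimes_RK$ is Gorenstein. So for every Noetherian local domain, \Cref{lem:ex} is actually an equivalence, and dimension one is needed only to translate Gorensteinness of the generic formal fibre into the existence of a canonical ideal.
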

\begin{proof} First assume $\ciuid_RK<\infty$.
Let $S=R\backslash\{0\}$ and let $T$ be the set of nonzero divisors of $\wh R$. By \Cref{lem:ex} and \cite[Proposition 2.7]{almostG} it suffices to show that $S^{-1}\wh{R}\cong T^{-1}\wh R$. Since $\wh R$ is flat over $R$, it is torsion-free, and it follows that $S\subseteq T$. This provides an injection $\varphi:S^{-1}\wh R\rightarrow T^{-1}\wh R$. We note that by \cite[Page 110]{olb} $S^{-1}\wh R$ is Artinian. Let $x\in T$, then $x/1$ is a nonzero divisor in $S^{-1}\wh R$ since $\widehat R$ is torsion-free over $R$. Since $S^{-1}\wh R$ is Artinian, it follows that $x/1$ is a unit. By the universal property of localization, this gives a map $\psi:T^{-1}\wh R\rightarrow S^{-1}\wh R$. One can verify that $\psi$ is the inverse of $\varphi$.

Conversely, if $R$ admits a canonical module $D$, then, since $\id_R K<\infty$, we have $\fd_R \RHom_R(K, D)<\infty$, hence $\cifd_R \RHom_R(K,D)<\infty$. Thus, $\ciuid_R K<\infty$ by \cite[Corollary 4.6]{ciInj}.
\end{proof}

In \cite[Question 3.9]{cihid}, the authors ask whether for a local ring $R$, $M\in\Db$ and $\p\in\Spec R$, one has
\[
\CIHid_{R_\p}M_\p\leq\CIHid_RM.
\]
We answer this question when $M\in\Dfb$ and $R$ has a dualizing complex. If $\CIHid_RM=\infty$, then there is nothing to prove. Otherwise, by \Cref{cor:ciinj}, it suffices to show the proposition below, which is an injective version of \cite[Proposition 1.6]{cidim} and is proved similarly.
\begin{proposition}
Let $R$ be a commutative Noetherian ring not necessarily local and $M\in\Dfb$. Then,
\[
\ciid_RM=\sup\{\ciid_{R_\p}M_\p\mid \p\in \Supp_RM\}.
\]
\end{proposition}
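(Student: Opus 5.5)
Since the left side is defined as $\sup\{\ciid_{R_\m}M_\m\mid \m\in\max R\}$, it suffices to prove two things. First, localization does not increase the invariant: for a local ring $R$, $M\in\Dfb$ and $\p\in\Spec R$, we have $\ciid_{R_\p}M_\p\le\ciid_RM$. Second, primes outside $\Supp_RM$ contribute nothing, since there $M_\p\simeq 0$ and, by convention, the dimension of the zero complex is $-\infty$. Granting the first claim, every $\p\in\Supp_RM$ lies in some maximal ideal $\m$, and $M_\p=(M_\m)_{\p R_\m}$. Hence the sup over $\Supp_RM$ is at most the sup over $\max R\cap\Supp_RM$. The reverse inequality is trivial, because maximal ideals in the support are among the primes considered.

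For the local inequality I follow \cite[Proposition 1.6]{cidim}. Assume $\ciid_RM<\infty$ and pick a quasi-deformation $R\to R'\leftarrow Q$ with $\id_Q(R'\lotimes_RM)<\infty$. By faithful flatness (lying over, \cite[Theorem 7.3(i)]{matsu}), choose $\p'\in\Spec R'$ minimal over $\p R'$, so $\p'\cap R=\p$. Let $\q$ be its preimage in $Q$. Then $R_\p\to R'_{\p'}\leftarrow Q_\q$ is again a quasi-deformation. The first map is flat local. The kernel of the second is the localization of a $Q$-regular sequence; it is generated by a $Q_\q$-regular sequence after discarding elements not in $\q$ (the Koszul complex stays acyclic, and we may reduce to a regular sequence in $\q Q_\q$), and $\pd_{Q_\q}R'_{\p'}$ equals the number of elements of this sequence lying in $\q$, which is $\le\pd_QR'$. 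We also have $R'_{\p'}\lotimes_{R_\p}M_\p\simeq(R'\lotimes_RM)_{\p'}\simeq (R'\lotimes_RM)_\q$ as $Q_\q$-complexes, since $R'\lotimes_RM$ is an $R'$-complex and $R'_{\p'}$ is a localization of $R'_\q$ at a prime over $\q$. More carefully, $(R'\lotimes_RM)_{\p'}$ is a localization of $(R'\lotimes_RM)_\q$, and injective dimension does not increase under localization for complexes in $\Dfb$.

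The key estimate is then
\[\id_{Q_\q}(R'\lotimes_RM)_{\p'}-\pd_{Q_\q}R'_{\p'}\le \id_QR'\lotimes_RM-\pd_QR'.\]
Injective dimension of a complex in $\Dfb$ does not simply localize without loss: the standard inequality for $X\in\Dfb[Q]$ is $\id_{Q_\q}X_\q\le\id_QX-\dim Q/\q$. The difference between $\pd_QR'$ and $\pd_{Q_\q}R'_{\p'}$ is the number $c$ of regular-sequence elements not in $\q$, and this must be absorbed. Since the kernel elements outside $\q$ together with $\q$ generate an ideal of height greater than $\q$'s, we get $c\le\dim Q/\q$. Because the target is finite $\ciid$, I can avoid this bookkeeping by using the Bass formula \cite[Proposition 2.11(a)]{ciInj}: once $\ciid_{R_\p}M_\p<\infty$ is known, it equals $\depth R_\p-\inf M_\p$. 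So the numerical inequality reduces to $\depth R_\p-\inf M_\p\le\depth R-\inf M$, which is the standard localization inequality for Bass-type numbers (\cite[Proposition 1.6]{cidim} style, using $\depth R_\p+\dim R/\p\le\depth R$ fails in general, so instead compare via $\id_{Q_\q}\le\id_Q$ directly).

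The main obstacle is this numerical comparison. My first attempt is to work directly with injective dimensions on $Q$: bound $\id_{Q_\q}X_\q+c\le\id_QX$ for $X=R'\lotimes_RM$, using that $X$ is annihilated (up to homotopy) by the $c$ regular elements outside $\q$. Here I would use that $\id_QX\ge\id_{Q_\q}X_\q+\dim Q/\q$ when $\q\in\Supp X$, which holds via Bass numbers, $\mu^{i+\dim Q/\q}_{\mathfrak n}\ne0$ whenever $\mu^i_\q\ne0$. I would also need to check that $\q\in\Supp_Q X$, which follows from $\p\in\Supp_RM$ and flatness.
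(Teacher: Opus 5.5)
Your reduction to the local inequality and your choice of $\p'$ over $\p$ with $\q=\tau^{-1}(\p')$ are exactly what the paper does, following \cite[Proposition 1.6]{cidim}. The trouble is the paragraph you call the main obstacle. It comes from an oversight, and the arguments you offer for it are not valid as written.

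Since $\p'$ is a prime of $R'=Q/\ker\tau$, its preimage $\q$ contains $\ker\tau$. So every element of the $Q$-regular sequence generating $\ker\tau$ lies in $\q$, and your number $c$ is always $0$. (If a generator were outside $\q$, then $R'_\q=0$, which contradicts $\p'\in\Spec R'$.) Consequently:
\begin{itemize}
\item $R'_{\p'}=R'_\q=Q_\q/(\ker\tau)Q_\q$;
\item the images of the sequence form a $Q_\q$-regular sequence in $\q Q_\q$, so $\pd_{Q_\q}R'_{\p'}=\pd_QR'$;
\item $R'_{\p'}\lotimes_{R_\p}M_\p\simeq(R'\lotimes_RM)_\q$.
\end{itemize}
The key estimate is then just $\id_{Q_\q}X_\q\le\id_QX$ for $X=R'\lotimes_RM$. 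That is precisely the paper's chain of (in)equalities. You need no Bass numbers, no $\dim Q/\q$, and no Bass formula.

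The detours you sketched would not have closed the gap if $c$ had been positive.
\begin{itemize}
\item The inequality $\depth R_\p-\inf M_\p\le\depth R-\inf M$ is not a general localization fact. It fails for $M=R$ whenever $\depth R_\p>\depth R$. For example, take $R=\kk[[x,y,z,u,v]]/\bigl((x,y,z)\cap(u,v)\bigr)$ and $\p=(x,y,u,v)$: then $\depth R=1$, while $R_\p$ is regular of dimension $2$. When $\ciid$ is finite the inequality does hold, but as a consequence of the proposition, not as an input to it.
\item The bound $c\le\dim Q/\q$ is false for regular sequences avoiding a prime. Take $Q=\kk[[a,b]]$, $\q=(a)$ and the sequence $b,a+b$: here $c=2>1=\dim Q/\q$. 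Your height argument only shows that the height goes up, not by how much.
\end{itemize}
So, as written, your proof of the key estimate is not justified. The single observation $\ker\tau\subseteq\q$ repairs it and reduces your argument to the paper's.
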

\begin{proof}
It suffices to show that when $R$ is local and $\p\in\Supp_RM$ one has $\ciid_{R_\p}M_\p\leq\ciid_RM$. Assume that $\ciid_RM<\infty$. Let $R\rightarrow R'\xleftarrow{\tau} Q$ be a quasi-deformation with $\id_Q R'\lotimes_R M<\infty$. By \cite[Theorem 7.3(i)]{matsu} there is $\p'\in\Spec R'$ such that $\p'\cap R=\p$. Let $\q\colonequals\tau^{-1}(\p')$. Then $R_\p\rightarrow R'_{\p'}\leftarrow Q_\q$ is a quasi-deformation of $R_\p$. Therefore,
\begin{align*}
\ciid_{R_\p}M_\p&\leq\id_{Q_\q}(R'_{\p'}\lotimes_{R_\p}M_\p)-\pd_{Q_\q}R'_{\p'}\\
&=\id_{Q_\q}(R'\lotimes_RM)_\q-\pd_{Q_\q}R'_{\p'}\\
&\leq\id_QR'\lotimes_RM-\pd_{Q_\q}R'_{\p'}\\
&=\id_QR'\lotimes_RM-\pd_QR'.
\end{align*}
Taking the infimum over all quasi-deformations of $R$ yields the desired inequality.
\end{proof}

\bibliographystyle{amsplain}
\bibliography{biblio}
\end{document}